\newtheorem{theorem}{Theorem}[section]
\newtheorem{lemma}[theorem]{Lemma}
\newtheorem*{thm:main1}{Theorem~\ref{thm:main1}}
\newtheorem*{thm:main2}{Theorem~\ref{thm:main2}}
\newcommand\extrafootertext[1]{%
    \bgroup
    \renewcommand\thefootnote{\fnsymbol{footnote}}%
    \renewcommand\thempfootnote{\fnsymbol{mpfootnote}}%
    \footnotetext[0]{#1}%
    \egroup
}
\newcommand\fw{\operatorname{fw}}
\newcommand\abs[1]{\lvert #1\rvert}
\newcommand{\obs}{\operatorname{obs}}
\title{A characterization of graphs of radius-$r$ flip-width at most $2$}
	\author{Yeonsu Chang}
    \author{Sejin Ko}
	\author[1,2]{O-joung Kwon}
	\author{Myounghwan Lee}
	\affil{Department of Mathematics, Hanyang University, Seoul, South Korea.}
	\affil[2]{Discrete Mathematics Group, Institute for Basic Science (IBS), Daejeon, South Korea}
	\date\today
\begin{document}

\maketitle
\extrafootertext{Y. Chang, O. Kwon, and M. Lee are supported by the National Research Foundation of Korea (NRF) grant funded by the Ministry of Science and ICT (No. NRF-2021K2A9A2A11101617 and No. RS-2023-00211670). S. Ko is supported by the National Research Foundation of Korea (NRF) grant funded by the Korea government MSIT (No. NRF-2022R1C1C1010300). O. Kwon is also supported  by Institute for Basic Science (IBS-R029-C1).}

	\extrafootertext{E-mail addresses: \texttt{yeonsu@hanyang.ac.kr} (Y. Chang), 
   \texttt{tpwls960104@hanyang.ac.kr} (S. Ko),  \texttt{ojoungkwon@hanyang.ac.kr} (O. Kwon) and \texttt{sycuel@hanyang.ac.kr} (M. Lee) }

\begin{abstract}
    The radius-$r$ flip-width of a graph, for $r\in \mathbb{N}\cup \{\infty\}$, is a graph parameter defined in terms of a variant of the cops and robber game, called the flipper game, and it was introduced by Toru\'{n}czyk (FOCS 2023). 
    We prove that for every $r\in (\mathbb{N}\setminus \{1\})\cup \{\infty\}$, the class of graphs of radius-$r$ flip-width at most $2$ is exactly the class of ($C_5$, bull, gem, co-gem)-free graphs, which are known as totally decomposable graphs with respect to bi-joins. 
\end{abstract}
\section{Introduction}

Toru\'{n}czyk~\cite{flipwidth} introduced a graph parameter called radius-$r$ flip-width for $r\in \mathbb{N}\cup \{\infty\}$, which is defined using a variant of the cops and robber game, so called the flipper game. In the flipper game, instead of positioning cops on vertices, the flipper announces a partition of the vertex set and flips edges on some parts, or between some two parts, and the runner can move from the previous position to a new position by traversing a path of length at most $r$. For $r\in \mathbb{N}\cup \{\infty\}$, the \emph{radius-$r$ flip-width} of a graph is the minimum~$k$ for which the flipper has a strategy to catch the runner using a sequence of such flippings with partitions into at most $k$ parts. A class $\mathcal{C}$ of graphs has \emph{bounded flip-width} if the radius-$r$ flip-width of graphs in $\mathcal{C}$ is bounded for each $r\in \mathbb{N}$. For the precise definition, see Section~\ref{sec:prelim}. 

Flip-width generalizes twin-width~\cite{BonnetKTW2022}, which has been recently focused~\cite{BonnetKTW2022, BonnetGKTW2022, BonnetGKTW2021, BonnetGOSTT2022, BonnetGOT2023, BonnetKRT2022, PilipczukS2023, BonnetD2023, AhnHKO2022}. Bonnet, Kim, Reinald, and Thomass\'{e}~\cite{BonnetKTW2022} showed that
classes of bounded rank-width and classes of $H$-minor free graphs have bounded twin-width, and FO model checking is fixed parameter tractable on graphs of bounded twin-width provided that a witness of small twin-width is given.  
Toru\'{n}czyk~\cite{flipwidth} showed that for every $r\in \mathbb{N}$, if the twin-width of a graph $G$ is $d$, then its radius-$r$ flip-width is at most $2^d\cdot d^{\mathcal{O}(r)}$. Thus, any class of bounded twin-width has bounded flip-width. 
Toru\'{n}czyk~\cite[XI.5]{flipwidth} proposed a problem of finding an analogue of the model-checking result of Bonnet et al.~\cite{BonnetKTW2022} for classes of bounded flip-width.  
Furthermore, he showed that bounded expansion classes also have bounded flip-width.

In this paper, we focus on characterizing graphs of radius-$r$ flip-width at most $2$.  
For each $r\in \mathbb{N}\cup \{\infty\}$,
radius-$r$ flip-width possesses an interesting property that for every graph $G$, $G$ and its complement have the same radius-$r$ flip-width. This is because we can apply any strategy for $G$ to the complement of $G$ as well. Also, the radius-$r$ flip-width of a graph does not increase when taking its induced subgraph. Thus, it is natural to consider problems characterizing graphs of small radius-$r$ flip-width, in terms of induced subgraphs and complements.
Toru\'{n}czyk~\cite{flipwidth} showed that  radius-$\infty$ flip-width is asymptotically equivalent to clique-width~\cite{CourcelleO2000} and rank-width~\cite{Oum05}, meaning that a class of graphs has bounded radius-$\infty$ flip-width if and only if it has bounded clique-width.
Note that clique-width or rank-width may change when taking the complement. For example, complete graphs on at least two vertices have clique-width $2$ and rank-width $1$  while graphs without edges have clique-width $1$ and rank-width $0$.

Observe that for any $r\in \mathbb{N}\cup \{\infty\}$, graphs of radius-$r$ flip-width $1$ are exactly complete graphs and graphs with no edges. If a graph $G$ is neither a complete graph nor a graph without edges, then $G$ contains three vertices $v,w,z$ where $vw$ is an edge and $wz$ is not an edge of $G$. Since we only allow a partition with one set, the only graphs that we can make are $G$ and its complement. But if the runner stays on $w$, then the flipper cannot catch the runner. Clearly, if $G$ is a complete graph or a graph without edges, then the flipper can make a graph into a graph without edges, and catch the runner. 

\begin{figure}
  \centering
  \begin{center}
    \tikzstyle{v}=[circle,draw,fill=black!50,inner sep=0pt,minimum width=4pt]
    \begin{tikzpicture}
      \draw (0,.5) node[v](v){};
      \draw (-1.5,-.5) node[v](v1){};
      \draw(-.5,-.5)node[v](v2){};
      \draw (.5,-.5) node[v](v3){};
      \draw (1.5,-.5) node[v](v4){};
      \draw(v)--(v1)--(v2)--(v3)--(v4)--(v);
      \draw(0,-.5) node [label=below:$C_5$]{};
    \end{tikzpicture}
    $\quad\quad$ \begin{tikzpicture}
      \draw (0,.5) node[v](v){};
      \draw (-1.5,-.5) node[v](v1){};
      \draw(-.5,-.5)node[v](v2){};
      \draw (.5,-.5) node[v](v3){};
      \draw (1.5,-.5) node[v](v4){};
      \draw(v1)--(v2)--(v3)--(v4);
      \draw(v2)--(v)--(v3);
      \draw(0,-.5) node [label=below:bull]{};
    \end{tikzpicture}
    $\quad\quad$
    \begin{tikzpicture}
      \draw (0,.5) node[v](v){};
      \draw (-1.5,-.5) node[v](v1){};
      \draw(-.5,-.5)node[v](v2){};
      \draw (.5,-.5) node[v](v3){};
      \draw (1.5,-.5) node[v](v4){};
      \draw(v1)--(v2)--(v3)--(v4);
      \draw(v2)--(v)--(v3);
      \draw(v1)--(v)--(v4);
      \draw(0,-.5) node [label=below:gem]{};
    \end{tikzpicture}
    $\quad\quad$
    \begin{tikzpicture}
      \draw (0,.5) node[v](v){};
      \draw (-1.5,-.5) node[v](v1){};
      \draw(-.5,-.5)node[v](v2){};
      \draw (.5,-.5) node[v](v3){};
      \draw (1.5,-.5) node[v](v4){};
      \draw(v1)--(v2)--(v3)--(v4);
      \draw(0,-.5) node [label=below:co-gem]{};
    \end{tikzpicture}
  \end{center}
  \caption{$C_5$, bull, gem and co-gem graphs}
\label{fig:smallgraphs}
\end{figure}

We show that except the case $r=1$, the class of graphs of radius-$r$ flip-width at most $2$ is   exactly the class of $(C_5, \text{bull}, \text{gem}, \text{co-gem})$-free graphs, where $C_5$, $\text{bull}$, $\text{gem}$, and $\text{co-gem}$ are illustrated in Figure~\ref{fig:smallgraphs}. This class was known as the class of graphs totally decomposable with respect to bi-joins~\cite{bijoindecomposition}, and we use this structural result to prove the theorem. 

\begin{theorem}\label{thm:main}
  Let $r\in (\mathbb{N}\setminus \{1\})\cup \{\infty\}$. The class of graphs of radius-$r$ flip-width at most $2$ is the class of $(C_5, \text{bull}, \text{gem}, \text{co-gem})$-free graphs.
\end{theorem}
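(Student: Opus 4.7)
The plan is to prove the equivalence in two directions, exploiting two basic monotonicity properties: radius-$r$ flip-width is non-decreasing in $r$ (a larger radius gives the runner strictly more escape routes, so the flipper needs at least as many parts), and it is hereditary under induced subgraphs (the runner can simply ignore any extra vertices). Consequently, it suffices to prove (i) that each of $C_5$, bull, gem, and co-gem has radius-$2$ flip-width at least $3$, and (ii) that every $(C_5, \text{bull}, \text{gem}, \text{co-gem})$-free graph has radius-$\infty$ flip-width at most $2$.

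For direction (i), I would perform a finite case analysis. Each of the four forbidden graphs has exactly five vertices, so the set of candidate $2$-partitions is small and, up to symmetry, very few genuinely distinct partitions arise; in each partition the flipper has only eight Boolean choices of which of the three edge-classes (two within-part and one between-part) to toggle. For each forbidden graph $F$, I would describe a runner strategy that, from any current vertex and against any admissible flipper move, exhibits a target vertex at distance at most $2$ in the resulting graph such that the runner never becomes isolated. Automorphism symmetries of each $F$ and the complementation symmetry pairing gem with co-gem (and the self-complementarity of $C_5$ and bull) collapse many subcases together.

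For direction (ii), I would invoke the structure theorem of \cite{bijoindecomposition}: the $(C_5, \text{bull}, \text{gem}, \text{co-gem})$-free graphs are exactly the graphs totally decomposable with respect to bi-joins. This provides, for every such graph $G$, a decomposition tree in which every internal node is a \emph{degenerate} (non-prime) bi-join node. Using this decomposition, the flipper strategy is defined recursively on the tree. In each round, knowing the runner's current vertex $v$, the flipper locates a top-level bi-join $(V_1, V_2)$ with $v \in V_1$, selects a $2$-partition derived from the bi-join, and chooses flips that sever all edges between $V_1$ and $V_2$ in the resulting graph. This confines the runner to $V_1$ at the next move, and by hereditariness $G[V_1]$ is still $(C_5, \text{bull}, \text{gem}, \text{co-gem})$-free, so the strategy can recurse on a strictly smaller subtree; after a bounded number of rounds the runner is trapped in a singleton.

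The principal obstacle is the mismatch between the three-class flip vocabulary of a $2$-partition and the richer four-class bipartite pattern $(X_1, Y_1; X_2, Y_2)$ intrinsic to a bi-join. Making all inter-part edges toggleable as a single block may require regrouping the four canonical bi-join parts into two flipper parts (for instance, partitioning as $(X_1 \cup Y_2, Y_1 \cup X_2)$ rather than as $(V_1, V_2)$), and the correct regrouping will depend on the specific degenerate type (series, parallel, or degenerate bi-join) of the node being processed. Proving that such a regrouping always exists, that the resulting inter-part edge set is indeed uniform (empty or complete, so that a single flip clears it), and that its restriction to $V_1$ remains compatible with the decomposition-tree induction, is where I expect the main technical effort to lie.
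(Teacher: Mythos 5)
Your overall architecture (lower bound from the four five-vertex obstructions, upper bound from total decomposability with respect to bi-joins) coincides with the paper's, but both halves have a gap at the decisive technical step. For the upper bound, the per-round plan of choosing a bi-join $(V_1,V_2)$ with the runner's current vertex in $V_1$, severing it with one $2$-flip, and recursing on $G[V_1]$ does not work as stated. The flipper must announce the flip before the runner moves, and the runner moves in the \emph{previous} graph, so with radius $\infty$ it can land on either side of the announced bi-join; moreover, once the runner has been confined to a set $X_t$, every subsequent flip must again have no edges between the runner's current region and its complement, or a single round of leaked edges lets it escape the confinement entirely. A single bipartition cannot simultaneously split off the next child $X_{t_i}$, keep $\bigcup_{j>i}X_{t_j}$ sealed off from the already-discarded part, and remain agnostic about which side the runner will choose. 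The paper resolves exactly this with Lemma~\ref{lem:threepartition}: for a \emph{triple} of bi-joins $\bigl(Y_t\cup\bigcup_{j<i}X_{t_j},\,X_{t_i},\,\bigcup_{j>i}X_{t_j}\bigr)$ at a complete node, one $2$-flip removes all edges between all three parts. In addition, since that flip unavoidably perturbs edges \emph{inside} the parts, the induction statement $(\ast)$ in Theorem~\ref{thm:bounding} must be phrased for an arbitrary $2$-flip of $G$ with no $X_t$--$Y_t$ edges; ``recurse on $G[V_1]$ by hereditariness'' is not the right induction hypothesis. Your instinct that the four-block bi-join pattern must be regrouped into two flip classes is correct, but the object to be regrouped is a three-way partition, not a single bi-join.

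For the lower bound, a forward case analysis can in principle work, but the criterion ``from any current vertex and against any admissible flipper move there is a reachable non-isolated vertex'' is not sufficient: the flipper can produce $2$-flips of these graphs in which every non-isolated vertex lies in a component of size two (e.g.\ flipping the triangle of the bull), so the runner must maintain a multi-round invariant, namely never entering a two-vertex component both of whose vertices can be isolated by the next flip. Determining which flips create two isolated vertices and which create two-vertex components, and showing these never chain, is the actual content of the argument (Lemmas~\ref{lem:twoisolated}--\ref{lem:componentsizetwo2}), which the paper organizes as a backward analysis of the last two rounds of a hypothetical winning play, using Hertz's theorem (Theorem~\ref{Hertz1999}) to cut the case work. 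Note also that the runner's path of length at most $2$ is measured in the previous graph $G_{i-1}$, not in the newly announced one, so ``distance at most $2$ in the resulting graph'' is the wrong metric for the escape move.
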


    We observe in Section~\ref{sec:conclusion} that gem and co-gem have radius-$1$ flip-width at most $2$. Thus, the class of graphs of radius-$1$ flip-width at most $2$ is strictly larger than the class of $(C_5, \text{bull}, \text{gem}, \text{co-gem})$-free graphs.  We leave the problem of characterizing graphs of radius-$1$ flip-width at most $2$ as an open problem.

    This paper is organized as follows. 
    In Section~\ref{sec:prelim}, we introduce basic definitions and notations, including  bi-joins and bi-join decompositions.
    In Section~\ref{sec:flipwidth3}, we prove that $C_5$, bull, gem and co-gem have radius-$r$ flip-width at least 3 for all $r\in (\mathbb{N}\setminus \{1\})\cup \{\infty\}$. In Section~\ref{sec:boundedflipwidth}, we prove that  $(C_5, \text{bull}, \text{gem}, \text{co-gem})$-free graphs have radius-$r$ flip-width  at most $2$ for all $r\in \mathbb{N}\cup \{\infty\}$.

\section{Preliminaries}\label{sec:prelim}

In this paper, all graphs are finite, undirected, and simple. Let $\mathbb{N}$ denote the set of all positive integers. For an integer $n$, let $[n]$ denote the set of all positive integers at most $n$.

Let $G$ be a graph. We denote by $V(G)$ the vertex set and $E(G)$ the edge set of $G$. For a set $S \subseteq V(G)$, let $G[S]$ denote the subgraph of $G$ induced by $S$. For $v\in V(G)$, let $N_G(v) := \{u \in V(G) : uv \in E(G)\}$ be the \emph{neighborhood} of $v$ in $G$. For $S\subseteq V(G)$, let $N_G(S)$ be the set of vertices in $V(G)\setminus S$ having a neighbor in $S$. A vertex $v$ in $G$ is a \emph{dominating vertex} if $N_G(v)\cup \{v\}=V(G)$.
For a graph $H$, we denote by $\overline{H}$ the complement of $H$. 

For two disjoint sets $A$ and $B$ of vertices in $G$, we say that $A$ is \emph{complete} to $B$ if for all $a\in A$ and $b\in B$, $a$ is adjacent to $b$, and it is \emph{anti-complete} to $B$ if for all $a\in A$ and $b\in B$, $a$ is not adjacent to $b$.

For a set $\mathcal{H}$ of graphs, a graph $G$ is \emph{$\mathcal{H}$-free} if $G$ contains no induced subgraph isomorphic to $H$ for every $H\in\mathcal{H}$.

For a graph $G$ and two sets $A, B$ of vertices in $G$, let $G\oplus (A,B)$ be the graph on the vertex set $V(G)$ such that for distinct vertices $u$ and $v$ in $G$,
$uv\in E(G\oplus (A,B) )$ if and only if     \begin{itemize}
        \item $(u,v)\in (A\times B)\cup (B\times A)$ and $uv\notin E(G)$, or
        \item $(u,v)\notin (A\times B)\cup (B\times A)$ and $uv\in E(G)$.
    \end{itemize}
We say that $G\oplus (A, B)$ is the graph obtained from $G$ by flipping the pair $(A, B)$. For a collection $\mathcal{S}=\{(A_1, B_1), \ldots, (A_m, B_m)\}$ of pairs of sets of vertices in $G$, we define 
\[G\oplus \mathcal{S}=G\oplus(A_1, B_1)\oplus \cdots \oplus (A_m, B_m).\]
Note that this is well-defined, because the order of flipping pairs in $\mathcal{S}$ does not affect the resulting graph.

Let $\mathcal{F}_{\obs}=\{C_5, \text{ bull, gem, co-gem}\}$. We will use the following result in Section~\ref{sec:flipwidth3}. 
\begin{theorem}[Hertz~\cite{Hertz1999}]\label{Hertz1999}
    For every $G\in \mathcal{F}_{\obs}$ and every vertex partition $(A,B)$ of $G$, $G\oplus (A, B)$ is isomorphic to a graph in $\mathcal{F}_{\obs}$. 
\end{theorem}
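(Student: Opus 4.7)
The plan is a finite case analysis: each $G\in\mathcal{F}_{\obs}$ has only five vertices, so there are $2^5 = 32$ ordered pairs $(A,B)$ partitioning $V(G)$. Since $G\oplus(A,B) = G\oplus(B,A)$, only $16$ unordered partitions need to be considered per graph, and the trivial case $A=\emptyset$ returns $G$ itself. Quotienting further by the automorphism group of $G$ drastically cuts the number of representatives I must compute, so the bulk of the argument becomes checking only a handful of partitions.

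I would first record the automorphism groups: $\operatorname{Aut}(C_5) = D_5$ of order $10$, and each of $\operatorname{Aut}(\text{bull})$, $\operatorname{Aut}(\text{gem})$, $\operatorname{Aut}(\text{co-gem})$ is $\mathbb{Z}_2$. For $C_5$, vertex-transitivity collapses all $|A|=1$ partitions to a single orbit, while the $\binom{5}{2}=10$ pairs split into two orbits (edges vs.\ non-edges), with $|A|=3,4$ handled by the $A\leftrightarrow B$ swap; in total only $3$ nontrivial representatives. For bull, gem, and co-gem, there are at most two orbits per value of $|A|\in\{1,2\}$, giving at most $4$ nontrivial representatives each, so the whole verification consists of on the order of fifteen partitions.

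For each representative I would compute $G\oplus(A,B)$ explicitly and identify it with a member of $\mathcal{F}_{\obs}$. A useful shortcut is that the four graphs have pairwise distinct degree sequences, namely $(2,2,2,2,2)$, $(1,1,2,3,3)$, $(2,2,3,3,4)$, $(0,1,1,2,2)$, and each of them is determined up to isomorphism by its degree sequence. Thus I can first compute the new degree of each vertex, which changes by $d_v\mapsto d_v + |B| - 2|N_G(v)\cap B|$ for $v\in A$ (and symmetrically for $v\in B$), and then in the unique case per degree sequence do a brief adjacency check to confirm the isomorphism type. A global sanity check is that $\mathcal{F}_{\obs}$ is closed under complementation, since $C_5$ and bull are self-complementary while gem and co-gem are complements of one another.

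The main obstacle is simply the organization: there is no obvious high-level structural shortcut that bypasses the case-by-case computation, so the challenge is to exploit the symmetries carefully enough to keep the enumeration short and transparent. A tempting conceptual alternative is to argue that $\mathcal{F}_{\obs}$ consists exactly of the five-vertex graphs that are \emph{prime} with respect to bi-joins, and that the flip operation preserves the set of primes of each order; but proving that preservation seems to reduce to the same case analysis, so I would stick with the direct enumeration.
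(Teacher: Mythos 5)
The paper does not prove this statement at all: it is imported verbatim from Hertz's work on switching classes, so there is no internal proof to compare against. Your plan supplies a self-contained, elementary verification, and the strategy is sound: the operation $G\oplus(A,B)$ for a bipartition only toggles the edges between $A$ and $B$, the check is finite, reduction to unordered partitions and to orbits under $\operatorname{Aut}(G)$ is legitimate, the degree-change formula $d_v\mapsto d_v+|B|-2|N_G(v)\cap B|$ is correct, and the key shortcut is valid because each of the four degree sequences $(2,2,2,2,2)$, $(3,3,2,1,1)$, $(4,3,3,2,2)$, $(2,2,1,1,0)$ is realized by a unique five-vertex graph up to isomorphism (so no further adjacency check is actually needed once the degree sequence lands in this list). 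The closure of $\mathcal{F}_{\obs}$ under complementation is a reasonable sanity check, though note that complementation is not itself a flip of a bipartition, so it cannot replace any case.

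Two caveats. First, your orbit count is too optimistic: the bull, gem, and co-gem each have \emph{three} vertex orbits (not at most two), and their automorphism groups have order $2$, so the ten $2$-subsets fall into five to seven orbits each; the honest tally is roughly $3$ representatives for $C_5$ and eight to ten for each of the other three graphs, i.e.\ around thirty cases rather than fifteen. This does not threaten correctness, only the advertised brevity. Second, as written this is a procedure, not a proof: to be complete you must actually exhibit the table of representatives with the resulting degree sequences (for instance, for $C_5$ the three nontrivial representatives $|A|=1$, $|A|=2$ adjacent, $|A|=2$ non-adjacent yield the bull, the gem, and the co-gem respectively, which already shows all four graphs lie in one switching class). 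With the table filled in, the argument is a correct and arguably more transparent substitute for the citation.
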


\subsection{Flip-width}
We define the flipper game and radius-$r$ flip-width.
Let $G$ be a graph and let $\mathcal{P}$ be a partition of the vertex set of $G$. We say that $G'$ is a \emph{$\mathcal{P}$-flip} of $G$ if $G'=G\oplus \mathcal{S}$ with $\mathcal{S}=\{(A_i, B_i):i\in I\}$ such that for all $i\in I$, $A_i, B_i\in \mathcal{P}$. Since flips are involutive and commute with each other, we may assume that $\mathcal{S}$ contains at most ${|\mathcal{P}|+1\choose 2}$ pairs. We say that a $\mathcal{P}$-flip $G'$ of $G$ is a \emph{$k$-flip} if $\abs{\mathcal{P}}\le k$.

Let $r\in \mathbb{N}\cup \{\infty\}$ and $k\in \mathbb{N}$. The \emph{flipper game} with radius $r$ and width $k$ on a graph $G$ is played by two players, \emph{flipper} and \emph{runner}. At the beginning, set $G_0=G$ and the runner selects a starting vertex $v_0$ of $G$. In each $i$-th round for $i>0$, the flipper announces a $k$-flip $G_i$  of $G$ and the runner knows $G_i$ and selects a new position $v_i\in V(G)$ following a path of length at most $r$ from $v_{i-1}$ in the previous graph $G_{i-1}$. The game is \emph{won by the flipper} if the runner's new position $v_i$ is isolated in $G_i$. In this case, we also say that the runner is caught in $G_i$. The alternating sequence $(v_0, G_1, v_1, G_2, \ldots )$ obtained by playing the flipper game will be called a \emph{play} for the flipper game.

The \emph{radius-$r$ flip-width} of a graph $G$, denoted by $\fw_r(G)$, is the minimum $k$ such that the flipper has a winning strategy in the flipper game of radius $r$ and width $k$ on $G$.

\subsection{Bi-joins and bi-join decompositions}\label{sec:bijoin}

A \emph{bipartition} of a set $V$ is an unordered pair $\{V_1, V_2\}$ such that $V_1 \neq \emptyset$, $V_2 \neq \emptyset$, $V_1 \cap V_2 = \emptyset$ and $V_1 \cup V_2 = V$. Two bipartitions $\{V_1, V_2\}$ and $\{W_1, W_2\}$ of a set $V$ \emph{overlap}  if $V_1\cap W_1$, $V_1\cap W_2$, $V_2\cap W_1$, and $V_2\cap W_2$ are non-empty. 
An \emph{ordered bipartition} of a set $V$ is an ordered pair $(V_1, V_2)$ such that $\{V_1, V_2\}$ is a bipartition of $V$. We define the concept of overlapping bipartitions for ordered bipartitions in the same manner.

Let $G$ be a graph. An ordered bipartition $(V_1, V_2)$ of $V(G)$ is a \emph{bi-join} in $G$ if there exist $W_1\subseteq V_1$ and $W_2\subseteq V_2$ such that 
\begin{itemize}
    \item $W_1$ is complete to $W_2$ and anti-complete to $V_2\setminus W_2$, 
    \item $V_1\setminus W_1$ is complete to $V_2\setminus W_2$ and anti-complete to $W_2$.
\end{itemize}
We say that $(W_1, V_1\setminus W_1, W_2, V_2\setminus W_2)$ is a \emph{bi-join partition} of $(V_1, V_2)$. One can observe that if~$(W_1, V_1\setminus W_1, W_2, V_2\setminus W_2)$ is a bi-join partition of $(V_1, V_2)$, then $(V_1\setminus W_1, W_1, V_2\setminus W_2, W_2)$ is also a bi-join partition of $(V_1, V_2)$. We say that $(W_1, V_1\setminus W_1, W_2, V_2\setminus W_2)$ and $(V_1\setminus W_1, W_1, V_2\setminus W_2, W_2)$ are equivalent bi-join partitions.

A bi-join $(V_1, V_2)$ in a graph $G$ is \emph{strong} if there is no bi-join $(W_1, W_2)$ in $G$ such that $(V_1, V_2)$ and $(W_1, W_2)$ overlap.

 A collection $\mathcal{B}$ of bipartitions of a set $V$ is called \emph{bipartitive} if 
\begin{itemize}
    \item for every $v\in V$, $\{\{v\}, V\setminus \{v\}\}\in \mathcal{B}$, and
    \item for every overlapping bipartitions $\{A, B\}, \{C,D\}\in \mathcal{B}$, the following bipartitions
    \begin{itemize}
        \item $\{A\cup C, B\cap D\}$, 
        \item $\{A\cup D, B\cap C\}$, 
        \item $\{B\cup C, A\cap D\}$, 
        \item $\{B\cup D, A\cap C\}$,
        \item $\{(A\cup C)\setminus (A\cap C), (A\cup D)\setminus (A\cap D)\}$, 
    \end{itemize} 
    are contained in $\mathcal{B}$.
\end{itemize}
Cunningham~\cite{Cunningham1974} developed a decomposition scheme for bipartitive families.
Using a decomposition scheme for bipartitive families, de Montgolfier and Rao~\cite{bijoindecomposition} presented the following decomposition theorem with respect to bi-joins.

Let $G$ be a graph. A pair $(T, \beta)$ of a tree $T$ and a bijection $\beta$ from $V(G)$ to the set of leaves of $T$ is called a \emph{decomposition} of $G$. For every $e=uv\in E(T)$, let $T_{e,u}$ and $T_{e,v}$ be the components of $T-e$ containing $u$ and $v$, respectively, and for each $z\in \{u,v\}$, let $A_{e,z}$ be the set of vertices in $G$ that are mapped to leaves contained in $T_{e,z}$ by $\beta$. 

    \begin{theorem}[Cunningham~\cite{Cunningham1974}; de Montgolfier and Rao~\cite{bijoindecomposition}]\label{thm:complete}
    For a graph $G$, there is a unique decomposition $(T, \beta)$ of $G$ such that 
    for every $e=uv\in E(G)$, $(A_{e,u}, A_{e,v})$ is a strong bi-join. Furthermore, the nodes of $T$ can be labeled complete or prime such that 
    \begin{itemize}
        \item if $t$ is a complete node with neighbors $t_1, \ldots t_k$, then for every $I\subseteq [k]$ with $1\le \abs{I}<k$, $\left(\bigcup\limits_{i \in I} A_{tt_i, t_i}, \bigcup\limits_{i \in [k]\setminus I} A_{tt_i, t_i}\right)$ is a bi-join, 
        \item if $t$ is a prime node with neighbors $t_1, \ldots, t_k$, then for every $a \in [k]$, $\left(A_{tt_a, t_a}, \bigcup\limits_{i \in [k]\setminus \{a\}} A_{tt_i, t_i}\right)$ is a bi-join, and
        \item every bi-join of $G$ is presented in the above way.
    \end{itemize}
\end{theorem}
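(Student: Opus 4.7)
My plan is to prove Theorem~\ref{thm:complete} by showing that the collection of bi-joins of $G$, viewed as unordered bipartitions of $V(G)$, forms a bipartitive family in the sense defined just before the statement, and then invoking Cunningham's~\cite{Cunningham1974} general decomposition theorem for bipartitive families. This two-step architecture is exactly the route used by de Montgolfier and Rao~\cite{bijoindecomposition}, and it cleanly separates the graph-theoretic content (closure of bi-joins under the five overlap operations) from the combinatorial bookkeeping of the tree.

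For the first step, let $\mathcal{B}_G$ consist of all unordered bipartitions $\{V_1,V_2\}$ of $V(G)$ such that $(V_1,V_2)$ is a bi-join; note that whether $(V_1,V_2)$ or $(V_2,V_1)$ is taken does not matter, since both yield equivalent bi-join partitions up to the symmetry $(W_1, V_1\setminus W_1, W_2, V_2\setminus W_2) \leftrightarrow (V_1\setminus W_1, W_1, V_2\setminus W_2, W_2)$. The singleton axiom $\{\{v\}, V(G)\setminus\{v\}\}\in\mathcal{B}_G$ is immediate by taking $W_1=\{v\}$ and $W_2=N_G(v)$. For the closure axiom, I would take overlapping $\{A,B\},\{C,D\}\in\mathcal{B}_G$ with fixed bi-join partitions $(W_A, A\setminus W_A, W_B, B\setminus W_B)$ and $(W_C, C\setminus W_C, W_D, D\setminus W_D)$, and tabulate the adjacencies between the four nonempty quadrants $A\cap C, A\cap D, B\cap C, B\cap D$ against the $W$-sets; since each quadrant sits inside one side of each bi-join, its edges to any other quadrant are forced to be either complete or anti-complete. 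The candidate $W$-sets for each of the five derived bipartitions (the four cross-unions and the symmetric-difference one) are then dictated by which quadrants end up together, and a short check confirms the defining partition into "complete neighbor of the other side" and "anti-complete to the other side" works.

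For the second step, Cunningham's theorem applied to the bipartitive family $\mathcal{B}_G$ produces a unique tree $(T,\beta)$ in which the strong elements of $\mathcal{B}_G$ correspond exactly to the edges of $T$ via $e=uv\mapsto\{A_{e,u},A_{e,v}\}$; this yields the first sentence of the theorem after translating "strong element of $\mathcal{B}_G$" back to "strong bi-join". Cunningham's theorem further labels every internal node either complete or prime according to whether every nontrivial union of branches yields another member of $\mathcal{B}_G$ or only the single-branch-versus-rest partitions do; under the translation this gives the two bulleted structural properties, and the exhaustiveness clause ("every bi-join of $G$ is presented in the above way") is inherited from the analogous clause in Cunningham's theorem.

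The principal obstacle is the overlapping-closure verification in the first step. The subtlety is that a bi-join admits two equivalent bi-join partitions (swapping the $W$-sets with their complements on each side), and when two overlapping bi-joins meet, one must first reconcile these two choices so that the induced partitions on $A\cap C, A\cap D, B\cap C, B\cap D$ are consistent; choosing the wrong pairing produces a complete/anti-complete contradiction across a quadrant. Once the alignment is pinned down, the five closure checks reduce to straightforward reading of the $4\times 4$ adjacency table, but formulating the alignment lemma carefully and keeping the notation symmetric enough to avoid an explosion of cases is where the proof demands real care.
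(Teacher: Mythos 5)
Your proposal follows exactly the route the paper itself attributes to the cited sources: the paper does not prove Theorem~\ref{thm:complete} but imports it from Cunningham~\cite{Cunningham1974} and de Montgolfier and Rao~\cite{bijoindecomposition}, explicitly noting that the latter obtained it by applying Cunningham's decomposition scheme for bipartitive families to the family of bi-joins. Your outline (verify the singleton and overlap-closure axioms for bi-joins, with care about aligning the two equivalent bi-join partitions, then invoke Cunningham's tree decomposition for bipartitive families) is the same approach and correctly identifies where the real work lies.
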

The unique decomposition presented in Theorem~\ref{thm:complete} will be called the \emph{bi-join decomposition} of a given graph.

A bi-join $(V_1, V_2)$ in a graph $G$ is \emph{trivial} if $|V_1| = 1$ or $|V_2| = 1$.
We say that a graph $G$ is \emph{completely decomposable} with respect to bi-joins if every induced subgraph of $G$ on at least $4$ vertices has a non-trivial bi-join. de Montgolfier and Rao~\cite{bijoindecomposition} characterized graphs that are completely decomposable with respect to bi-joins.

\begin{theorem}[de Montgolfier and Rao~\cite{bijoindecomposition}]\label{thm:decomposable}
Let $G$ be a graph. The following are equivalent.
\begin{enumerate}
    \item $G$ is completely decomposable with respect to bi-joins.
    \item Every node of the bi-join decomposition of $G$ is complete.
    \item $G$ is ($C_5$, bull, gem, co-gem)-free. 
\end{enumerate}
\end{theorem}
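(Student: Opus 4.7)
I plan to prove the three-way equivalence by establishing $(1)\Leftrightarrow(3)$ and $(1)\Leftrightarrow(2)$ separately. Theorem~\ref{Hertz1999} drives the easier direction of the first, while Theorem~\ref{thm:complete} drives the second. The entire argument reduces to a single technical lemma:
\begin{quote}
$(\ast)$ Every $\mathcal{F}_{\obs}$-free graph on at least $4$ vertices admits a non-trivial bi-join.
\end{quote}
Since the class of $\mathcal{F}_{\obs}$-free graphs is hereditary, $(\ast)$ gives $(3)\Rightarrow(1)$ immediately.

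For $(1)\Rightarrow(3)$ I would argue contrapositively. If $G$ contains some $H\in\mathcal{F}_{\obs}$ as an induced subgraph, it suffices to show that $H$ itself has no non-trivial bi-join, which then exhibits an induced subgraph of $G$ on $5\ge 4$ vertices violating complete decomposability. Suppose $(V_1,V_2)$ were such a bi-join of $H$ with partition $(W_1,V_1\setminus W_1,W_2,V_2\setminus W_2)$. Set $A:=W_1\cup W_2$ and $B:=V(H)\setminus A$; when both are nonempty, a direct adjacency computation shows that $V_1$ is complete to $V_2$ in $H\oplus(A,B)$, which Theorem~\ref{Hertz1999} still places in $\mathcal{F}_{\obs}$. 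It then suffices to observe that no graph in $\mathcal{F}_{\obs}$ is a non-trivial complete join with both sides of size $\ge 2$: $C_5$ is $2$-regular, the bull has two leaves, the gem has a unique dominating vertex, and the co-gem has a unique isolated vertex, so no such bipartition can exist. The corner cases $A=\emptyset$ or $A=V(H)$ correspond to $(V_1,V_2)$ already being a non-trivial complete or anti-complete join of $H$, which the same inspection (combined with the self-complementarity of $\mathcal{F}_{\obs}$, since the bull and $C_5$ are self-complementary and gem and co-gem are complements of each other) rules out.

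For $(3)\Rightarrow(1)$, i.e.\ the lemma $(\ast)$, I would induct on $|V(G)|$. The base case $|V(G)|=4$ is a finite inspection: every $4$-vertex graph admits a non-trivial bi-join, since the only candidate split has both sides of size $2$, and the handful of adjacency patterns across such a split all realize a bi-join partition. For the inductive step, I would pick a vertex $v$, apply induction to $G-v$ to obtain a non-trivial bi-join $(V_1,V_2)$ of $G-v$ with partition $(W_1,V_1\setminus W_1,W_2,V_2\setminus W_2)$, and then attempt to place $v$ into one of these four parts so as to preserve the bi-join structure in $G$. The main obstacle, and the heart of the proof, is the case in which no such placement is legal: here the neighborhood of $v$ is ``incompatible'' with the partition, and one has to argue that this incompatibility, together with witnesses chosen from the four parts, forces an induced copy of one of $C_5$, bull, gem, or co-gem. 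To keep the case analysis manageable I would exploit the self-duality of $\mathcal{F}_{\obs}$ under complementation and the flip-closure of Theorem~\ref{Hertz1999} to normalize the incompatibility pattern to a small set of canonical cases; this is exactly the role Theorem~\ref{Hertz1999} plays in the proof.

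Finally, for $(1)\Leftrightarrow(2)$ I would use Theorem~\ref{thm:complete}. For $(1)\Rightarrow(2)$, suppose some node $t$ of the bi-join decomposition of $G$ is prime with $k\ge 4$ neighbors $t_1,\ldots,t_k$; choosing a representative $v_i\in A_{tt_i,t_i}$ for each $i$ yields an induced subgraph $Q_t\subseteq G$ on $k\ge 4$ vertices whose only bi-joins are the trivial splits $(\{v_a\},V(Q_t)\setminus\{v_a\})$ guaranteed by the primality of $t$, contradicting $(1)$. Conversely, if every internal node of the bi-join decomposition of $G$ is complete, then since $|V(G)|\ge 4$ some complete node can be split into two groups of subtrees with at least $2$ vertices on each side, giving a non-trivial bi-join of $G$; the same argument applied to each induced subgraph of $G$ on $\ge 4$ vertices (using the standard observation that its bi-join decomposition is obtained from $T$ by pruning and contracting, and that these operations preserve completeness of surviving nodes) yields $(1)$.
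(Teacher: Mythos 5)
First, a point of comparison: the paper does not prove this statement at all --- it is imported verbatim from de Montgolfier and Rao~\cite{bijoindecomposition}, so there is no in-paper proof to measure you against; your proposal has to stand on its own. Your direction $(1)\Rightarrow(3)$ is correct and rather elegant: flipping the pair $(W_1\cup W_2,\; (V_1\setminus W_1)\cup(V_2\setminus W_2))$ does turn a bi-join partition into a complete join between $V_1$ and $V_2$, Theorem~\ref{Hertz1999} keeps the result inside $\mathcal{F}_{\obs}$, and no graph in $\mathcal{F}_{\obs}$ is a join of two parts each of size at least $2$ (its complement would have to be disconnected with both components of size $\ge 2$, which fails for all four graphs). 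The reduction of $(3)\Rightarrow(1)$ to the lemma $(\ast)$ via heredity is also fine.

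The genuine gap is in your inductive step for $(\ast)$. You take \emph{one} non-trivial bi-join $(V_1,V_2)$ of $G-v$ supplied by the induction hypothesis, try to insert $v$ into one of its four parts, and claim that if every insertion fails then $G$ contains a member of $\mathcal{F}_{\obs}$. That claim is false. Let $G-v$ be the $4$-cycle $a\text{-}b\text{-}c\text{-}d\text{-}a$ with the non-trivial bi-join $(\{a,c\},\{b,d\})$ (witnessed by $W_1=\{a,c\}$, $W_2=\{b,d\}$), and let $v$ be adjacent to exactly $a$ and $b$. Every placement of $v$ fails: joining $W_1$ or $W_2$ requires $N(v)$ to contain $\{b,d\}$ or $\{a,c\}$, and joining an empty part requires $N(v)$ to miss $\{b,d\}$ or $\{a,c\}$; none holds, and the failure persists even if you re-derive the witness sets for the enlarged side. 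Yet $G$ is the house graph (the complement of $P_5$), which has $6$ edges and hence is $\mathcal{F}_{\obs}$-free, and which does have a non-trivial bi-join, namely $(\{v,a,d\},\{b,c\})$ with $W_1=\{v,a\}$, $W_2=\{b\}$ --- it just does not arise by extending the bi-join you started from. So the correct inductive step must quantify over \emph{all} non-trivial bi-joins of $G-v$, and additionally over bi-joins of $G$ whose restriction to $G-v$ is trivial (those with a side $\{u,v\}$), before a forbidden induced subgraph can be extracted; this is a structurally different and substantially heavier case analysis than the one you sketch, and it is precisely the part you have deferred. Separately, in $(1)\Rightarrow(2)$ you should justify that a prime node has degree at least $4$ (a degree-$3$ node yields a $3$-vertex quotient, which is useless) and that a non-trivial bi-join of the quotient $Q_t$ lifts to a bi-join of $G$ of the form $\bigl(\bigcup_{i\in I}A_{tt_i,t_i},\bigcup_{i\notin I}A_{tt_i,t_i}\bigr)$; both are standard in the bipartitive-family framework but are currently asserted rather than argued.
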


\section{$C_5$, bull, gem, and co-gem have radius-$2$ flip-width at least $3$}\label{sec:flipwidth3}

In this section, we show that $C_5$, bull, gem and co-gem have radius-$r$ flip-width at least 3 for each $r\in (\mathbb{N}\setminus \{1\})\cup \{\infty\}$. Recall that $\mathcal{F}_{\obs}=\{C_5, \text{ bull, gem, co-gem}\}$.

We prove this in the following way. It is sufficient to prove that the graphs in $\mathcal{F}_{\obs}$ have radius-$2$ flip-width at least $3$. Suppose $G\in \mathcal{F}_{\obs}$ and $G$ has radius-$2$ flip-width at most $2$. Assume that $(v_0, G_1, v_1, \ldots, v_{n-1}, G_n)$ is a play for the flipper game of radius $2$ and width $2$ played on~$G$ such that 
\begin{itemize}
    \item for every possible choice of the runner in $G_n$, the flipper catches the runner at $G_n$.
\end{itemize}
In Lemma~\ref{lem:lastconfiguration2}, we obtain that the set $S$ of isolated vertices of $G_n$ has at least $2$ vertices. 
Using the fact that $G_n$ is a $2$-flip of $G$, we deduce that $\abs{S}=2$. In Lemma~\ref{lem:lastconfiguration2}, we further show that in such a case, $G_{n-1}$ has a connected component on the set $S$, and later we will verify that this is not possible.

\begin{lemma}\label{lem:lastconfiguration2}
    Let $G$ be a graph. 
    Let $(v_0, G_1, v_1, \ldots, v_{n-1}, G_n)$  with $n\ge 1$ be a play for the flipper game of radius $2$ and width $2$ such that for every possible choice of the runner in $G_n$, the flipper catches the runner at $G_n$. The following hold.
    \begin{itemize}
        \item The set $S$ of all isolated vertices of $G_n$ has at least two vertices.
        \item If $|S|=2$, then $G_{n-1}[S]$ is a connected component of $G_{n-1}$.
    \end{itemize}
\end{lemma}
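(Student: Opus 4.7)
The plan is to exploit the reachability semantics of the flipper game. Let $R$ denote the set of vertices at distance at most $2$ from $v_{n-1}$ in $G_{n-1}$; these are exactly the legal choices for $v_n$. The hypothesis that every such choice leads to capture translates directly into the containment $R\subseteq S$, and both bullets will then follow from analyzing this containment.

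For the first bullet, I will use that the given sequence is a valid (non-terminated) play, so the runner has not yet been caught; in particular $v_{n-1}$ is not isolated in $G_{n-1}$. (This is immediate for $n\ge 2$ from the game rules, and for $n=1$ relies on the standing convention that the runner does not start at a vertex isolated in $G$.) Thus $v_{n-1}$ has some neighbor $u$ in $G_{n-1}$, and both $v_{n-1}$ (by staying) and $u$ (by traversing the edge) lie in $R\subseteq S$, forcing $|S|\ge 2$.

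For the second bullet, I will suppose $|S|=2$ and write $S=\{v_{n-1},u\}$ with $u$ a neighbor of $v_{n-1}$ in $G_{n-1}$. Since $R\subseteq S$ and $v_{n-1},u\in R$, the containment collapses to $R=S$. The strategy is then to rule out any neighbor of $v_{n-1}$ or of $u$ lying outside $S$: any neighbor $w\ne u$ of $v_{n-1}$ in $G_{n-1}$ would be at distance $1$ from $v_{n-1}$, hence in $R=S$, contradicting $w\notin\{v_{n-1},u\}$; symmetrically, any neighbor $w'\ne v_{n-1}$ of $u$ in $G_{n-1}$ would be at distance at most $2$ from $v_{n-1}$, hence in $R=S$, again impossible. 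This will pin down $N_{G_{n-1}}(v_{n-1})=\{u\}$ and $N_{G_{n-1}}(u)=\{v_{n-1}\}$, so the connected component of $G_{n-1}$ containing $v_{n-1}$ is exactly $\{v_{n-1},u\}=S$, meaning $G_{n-1}[S]$ is a component of $G_{n-1}$.

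There is no substantive obstacle; both parts reduce to bookkeeping around $R\subseteq S$. The only subtle point is ensuring that $v_{n-1}$ has a neighbor in $G_{n-1}$, which is automatic once one reads the hypothesis as describing a play in which the runner has survived rounds $1,\ldots,n-1$.
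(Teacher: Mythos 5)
Your proof is correct and follows essentially the same route as the paper's: both arguments rest on the observation that every vertex within distance $2$ of $v_{n-1}$ in $G_{n-1}$ must be isolated in $G_n$, together with the convention that $v_{n-1}$ is not already isolated in $G_{n-1}$. The only cosmetic difference is that you organize the argument around the reachable set $R$ while the paper works with the connected component of $G_{n-1}$ containing the caught vertex; your handling of the second bullet (pinning down $N_{G_{n-1}}(v_{n-1})$ and $N_{G_{n-1}}(u)$ directly) is, if anything, slightly more explicit.
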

\begin{proof}
        Since $n\ge 1$, the runner is not placed on an isolated vertex of $G=G_0$. Assume that the runner is caught at $v$ in $G_n$. Let $S$ be the set of all isolated vertices of $G_n$.
    
        Let $C$ be a connected component of $G_{n-1}$ containing $v$. By the rule of the game, the runner is in $C$ in $G_{n-1}$. If $\abs{V(C)} = 1$, then $V(C) = \{v\}$ and the runner is already caught at vertex $v$ in $G_{n-1}$. This contradicts the assumption that the runner is caught in $G_n$. So, $C$ has at least two vertices. 

    If $v$ is the unique isolated vertex of $G_n$, then the runner can move to a vertex in $V(C)\setminus \{v\}$ so that it is not caught, contradicting the assumption that the flipper catches the runner in $G_n$ for every possible choice of the runner.
    Thus, $S$ has at least $2$ vertices contained in $C$.

    For the second statement, assume that $|S|=2$ and $V(C)\setminus S\neq \emptyset$. Since this game is of radius $2$, 
    regardless of the position of $v_{n-1}$ in $G_{n-1}$, the runner can move to a vertex of $V(C)\setminus S$. Therefore, $V(C)\subseteq S$. Since $C$ has at least two vertices, we have $V(C)=S$.
\end{proof}

\begin{figure}
    \centering
        \tikzstyle{v}=[circle,draw,fill=black!30,inner sep=0pt,minimum width=4pt]
        \begin{tikzpicture}[scale = .8]
            \draw (-6.5,.5) node[v](v){};
            \node at (-6.5,.8) {\footnotesize A};
            \draw (-8,-.5) node[v](v1){};
            \node at (-8,-0.2) {\footnotesize A};
            \draw(-7,-.5)node[v](v2){};
            \node at (-7,-0.2) {\footnotesize B};
            \draw (-6,-.5) node[v](v3){};
            \node at (-6,-0.2) {\footnotesize B};
            \draw (-5,-.5) node[v](v4){};
            \node at (-5,-0.2) {\footnotesize B};
            \draw[thick] (v1)--(v2)--(v3)--(v4);
            \node at (-6.5, -1.2) {$G$};
            \draw[->] (-4.5,0) -- (-2.5,0);
            \node at (-3.5, .3) {\footnotesize $\oplus (B, B)$};
            \draw (-.5,.5) node[v](v){};
            \node at (-.5,.8) {\footnotesize A};
            \draw (-2,-.5) node[v](v1){};
            \node at (-2,-0.2) {\footnotesize A};
            \draw(-1,-.5)node[v](v2){};
            \node at (-1,-0.2) {\footnotesize B};
            \draw (0,-.5) node[v](v3){};
            \node at (0,-0.2) {\footnotesize B};
            \draw (1,-.5) node[v](v4){};
            \node at (1,-0.2) {\footnotesize B};
            \draw[thick] (v1)--(v2);
            \draw[thick] (v2) ..controls (-.25, -.75) and (.25, -.75) .. (v4);
            \node at (-.5, -1.2) {$G \oplus (B, B)$};
        \end{tikzpicture}
        \vskip 0.5cm
    
        \begin{tikzpicture}[scale = .8]
            \draw (-6.5,.5) node[v](v){};
            \node at (-6.5,.8) {\footnotesize B};
            \draw (-8,-.5) node[v](v1){};
            \node at (-8,-0.2) {\footnotesize A};
            \draw(-7,-.5)node[v](v2){};
            \node at (-7,-0.2) {\footnotesize A};
            \draw (-6,-.5) node[v](v3){};
            \node at (-6,-0.2) {\footnotesize B};
            \draw (-5,-.5) node[v](v4){};
            \node at (-5,-0.2) {\footnotesize B};
            \draw[thick] (v1)--(v2)--(v3)--(v4);
            \node at (-6.5, -1.2) {$G$};
            \draw[->] (-4.5,0) -- (-2.5,0);
            \node at (-3.5, .3) {\footnotesize $\oplus (A, A)$};
            \draw (-.5,.5) node[v](v){};
            \node at (-.5,.8) {\footnotesize B};
            \draw (-2,-.5) node[v](v1){};
            \node at (-2,-0.2) {\footnotesize A};
            \draw(-1,-.5)node[v](v2){};
            \node at (-1,-0.2) {\footnotesize A};
            \draw (0,-.5) node[v](v3){};
            \node at (0,-0.2) {\footnotesize B};
            \draw (1,-.5) node[v](v4){};
            \node at (1,-0.2) {\footnotesize B};
            \draw[thick] (v2)--(v3)--(v4);
            \node at (-.5, -1.2) {$G \oplus (A, A)$};
        \end{tikzpicture}
        
    \caption{Two possible cases in Lemma~\ref{lem:twoisolated}, where $G \oplus \mathcal{P}$ has at least two isolated vertices. }
    \label{fig:twoisolated}
\end{figure}

   In Lemmas~\ref{lem:twoisolated} and~\ref{lem:twoisolated2}, 
   we investigate which flips of graphs in $\mathcal{F}_{\obs}$ have at least two isolated vertices. We first deal with cases when we do not flip $(A,B)$ for a given vertex partition $(A,B)$ in Lemma~\ref{lem:twoisolated}, and then deal with cases when we flip $(A,B)$ in Lemma~\ref{lem:twoisolated2}. Note that for a graph $G\in \mathcal{F}_{\obs}$ and a vertex partition $(A,B)$ of $G$, by Theorem~\ref{Hertz1999}, $G\oplus (A,B)$ is isomorphic to a graph in $\mathcal{F}_{\obs}$. Because of it, Lemma~\ref{lem:twoisolated2} easily follows from Lemma~\ref{lem:twoisolated}.
\begin{lemma}\label{lem:twoisolated}
    Let $G\in \mathcal{F}_{\obs}$ and
    let $(A, B)$ be a vertex partition of $G$ with $|A|< |B|$ and let $H=G\oplus \mathcal{P}$ for some $\mathcal{P}\subseteq \{(A, A), (B, B)\}$. If $H$ has at least two isolated vertices, then either 
    \begin{itemize}
        \item $G$ is the co-gem and $A$ consists of the isolated vertex and one of the vertices of degree $1$ and $\mathcal{P}=\{(B, B)\}$, or
        \item $G$ is the co-gem and $A$ consists of one of the vertices of degree $1$ and its neighbor and $\mathcal{P}=\{(A, A)\}$.
    \end{itemize}   
    Moreover, in these cases, $H$ has exactly two isolated vertices.
\end{lemma}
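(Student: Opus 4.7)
The plan is a systematic case analysis built on two reductions. Since $\mathcal{P}\subseteq\{(A,A),(B,B)\}$, none of its flips touches an edge between $A$ and $B$, so any vertex isolated in $H$ already has no neighbor across the partition in $G$. Also, two of the four choices of $\mathcal{P}$ are eliminated at once: $\mathcal{P}=\emptyset$ would require $G$ itself to contain two isolated vertices, which no graph in $\mathcal{F}_{\obs}$ does (only the co-gem has an isolated vertex, and exactly one); while $\mathcal{P}=\{(A,A),(B,B)\}$ gives $H=\overline{G}\oplus(A,B)$ via the identity $G\oplus(A,A)\oplus(B,B)\oplus(A,B)=\overline{G}$, which by Theorem~\ref{Hertz1999} (applied to $\overline{G}\in\mathcal{F}_{\obs}$) is again isomorphic to a graph in $\mathcal{F}_{\obs}$, once more ruling out two isolated vertices. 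Hence $\mathcal{P}\in\{\{(A,A)\},\{(B,B)\}\}$.

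Since $|V(G)|=5$ and $|A|<|B|$, either $(|A|,|B|)=(1,4)$ or $(2,3)$, and for $|A|=1$ the flip $(A,A)$ is trivial, so only $\mathcal{P}=\{(B,B)\}$ is relevant there. I would then partition each remaining case by the location of the two isolated vertices of $H$ --- both in $A$, both in $B$, or one on each side --- and translate the isolation condition into a concrete degree condition on $G$. For instance, if $b_1\in B$ is isolated in $H$ with $(B,B)\in\mathcal{P}$, then $b_1$ is adjacent to every vertex of $B\setminus\{b_1\}$ in $G$ and to no vertex of $A$. Comparing such conditions with the degree sequences $(2,2,2,2,2)$, $(1,1,2,3,3)$, $(2,2,3,3,4)$, and $(0,1,1,2,2)$ of $C_5$, bull, gem, and co-gem rules out the ``both in $A$'' and ``both in $B$'' branches: each leads either to two isolated vertices in $G$ itself or to two high-degree vertices sharing a common non-neighbor across the partition, neither of which the degree sequences permit.

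The ``one on each side'' branch survives only when $G$ is the co-gem, because the absent flip (namely $(B,B)\notin\mathcal{P}$ for $\mathcal{P}=\{(A,A)\}$, or $(A,A)\notin\mathcal{P}$ for $\mathcal{P}=\{(B,B)\}$) forces one of the isolated vertices of $H$ to already be isolated in $G$, and only the co-gem in $\mathcal{F}_{\obs}$ has an isolated vertex. Once $G$ is the co-gem, the other isolated vertex of $H$ is pinned down by the remaining degree condition: for $\mathcal{P}=\{(B,B)\}$ it must be a degree-$2$ vertex of the co-gem, which forces $A\in\{\{v,v_1\},\{v,v_4\}\}$; for $\mathcal{P}=\{(A,A)\}$ it must be a degree-$1$ vertex together with its unique neighbor in $A$, which forces $A\in\{\{v_1,v_2\},\{v_3,v_4\}\}$. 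These are precisely the two configurations in the statement, and a direct computation of $H$ in each case confirms that its isolated vertex set has cardinality exactly $2$. The main difficulty is keeping the case split short; the two initial reductions and the uniqueness of the isolated vertex in the co-gem are what make this work.
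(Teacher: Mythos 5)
Your proposal is correct and follows essentially the same route as the paper: an exhaustive case analysis over $\abs{A}$, the content of $\mathcal{P}$, and the placement of the two isolated vertices of $H$ relative to the partition, resolved against the structure of the four five-vertex graphs. The one genuine difference is your upfront elimination of $\mathcal{P}=\{(A,A),(B,B)\}$ via the identity $G\oplus(A,A)\oplus(B,B)=\overline{G}\oplus(A,B)$ combined with Theorem~\ref{Hertz1999} applied to $\overline{G}$; the paper instead allows $\mathcal{P}$ to contain both flips and disposes of that possibility inside the case analysis. Your reduction is a clean simplification, since it leaves only $\mathcal{P}\in\{\{(A,A)\},\{(B,B)\}\}$ to examine. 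Two small points to tighten when writing this out in full. First, the sub-case where both isolated vertices of $H$ lie in $A$ with $\mathcal{P}=\{(A,A)\}$ forces $G[A]$ to be a $K_2$ component of $G$; this is neither of the two obstructions you name, and it is not excluded by degree sequences alone (the bull and the co-gem each have two vertices of degree $1$) --- you need the extra observation that in neither graph are those two vertices adjacent. Likewise, the ``both in $B$'' branch turns on the absence of closed twins in the four graphs, which again goes slightly beyond degree sequences, though your phrase about a common non-neighbor across the partition does capture the needed check. Second, you tacitly assume $A\neq\emptyset$; the paper handles $A=\emptyset$ explicitly (then $H\in\{G,\overline{G}\}$, and no graph in $\mathcal{F}_{\obs}$ has two isolated vertices). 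Neither point is a gap in the strategy, only in the bookkeeping.
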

\begin{proof}
Suppose $H$ has at least two isolated vertices.  Let $S\subseteq V(H)$ be a set of two isolated vertices in $H$. As $G$ has no two isolated vertices, $\mathcal{P}\neq \emptyset$. Observe that for every $D\in \mathcal{F}_{\obs}$, we have  $\overline{D}\in \mathcal{F}_{\obs}$. Thus, $A\neq \emptyset$. 

First assume that $|A|=1$ and let $v\in A$.  We may assume that $(A, A)\notin \mathcal{P}$, as flipping $(A,A)$ does not change the graph. Thus, $(B, B)\in \mathcal{P}$, and $B\cap S$ and $B\setminus S$ are complete in $G$. 

If $v\in S$, then $v$ is an isolated vertex in $G$ as $(A,B)\notin \mathcal{P}$. Thus $G$ is isomorphic to the co-gem. Since $\abs{S}=2$, $B$ contains exactly one vertex of $S$. Since $(B,B)\in \mathcal{P}$ and $B\cap S$ and $B\setminus S$ are complete in $G$, the vertex in $S\cap B$ has degree $3$ in $G$, a contradiction. If $v\notin S$, then $\abs{B\cap S}=\abs{B\setminus S}=2$ and $G$ contains a cycle of length $4$. Thus, $G$ is isomorphic to the gem. As the gem has no vertices of degree at most $1$, $v$ is complete to $B\setminus S$ in $G$. Then $G$ contains a subgraph isomorphic to $K_{2,3}$, a contradiction.

Now, we assume that $\abs{A}=2$.
Since $G$ has no two isolated vertices and has no component on two vertices, there is an edge between $A$ and $B$ in $G$. Since $(A,B)\notin \mathcal{P}$, one of the vertices in $A$ is not an isolated vertex in $H$. Thus, $\abs{S\cap A}\le 1$.

Assume that $\abs{S\cap A}=0$. Since $(A,B)\notin \mathcal{P}$, $A$ is anti-complete to $S$ in $G$. So, there is an edge between $S$ and $B\setminus S$ in $G$, and we have $(B, B)\in \mathcal{P}$. Then the two vertices in $S$ are vertices having the same neighborhood in $G$, which is not possible. 

Now, assume that $\abs{S\cap A}=1$. Let $w\in S\cap A$ and $z\in S\cap B$. Since $(A,B)\notin \mathcal{P}$, $w$ is anti-complete to $B$ and $z$ is anti-complete to $A$ in $G$.

If $(B, B)\in \mathcal{P}$, then $z$ is complete to $B\setminus \{z\}$ in $G$ and $z$ has degree $2$ in $G$. Since $w$ is anti-complete to $B$, $w$ has degree at most $1$ in $G$. So, $G$ is isomorphic to the bull or the co-gem. Since $w$ is anti-complete to the neighborhood of $z$ in $G$, $G$ is isomorphic to the co-gem and $w$ is the isolated vertex of $G$. This is one of the desired cases, depicted in Figure~\ref{fig:twoisolated}.

If $(B, B)\notin \mathcal{P}$, then $\mathcal{P}=\{(A,A)\}$. Since $(B, B)\notin \mathcal{P}$, $z$ is an isolated vertex in $G$, and $G$ is isomorphic to the co-gem. 
Since $w$ is anti-complete to $B$, $A$ consists of a vertex of degree $1$ and its neighbor in the co-gem. This is one of the desired cases, depicted in Figure~\ref{fig:twoisolated}.
\end{proof}

\begin{lemma}\label{lem:twoisolated2}
    Let $G\in \mathcal{F}_{\obs}$ and
    let $(A, B)$ be a vertex partition of $G$ with $|A|< |B|$ and let $H=G\oplus \mathcal{P}$ for some $(A, B)\in \mathcal{P}\subseteq \{(A, A), (B, B), (A, B)\}$. If $H$ has at least two isolated vertices, then either 
    \begin{itemize}
       \item $G$ is the gem and $A$ consists of two non-adjacent vertices of degree $2$ and $3$ and $\mathcal{P}=\{(B, B), (A, B)\}$, or
        \item $G$ is the gem and $A$ consists of the dominating vertex and a vertex of degree $3$ and $\mathcal{P}=\{(A, A), (A, B)\}$.
    \end{itemize}   
    Moreover, in these cases, $H$ has exactly two isolated vertices.
\end{lemma}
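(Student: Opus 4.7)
The plan is to reduce Lemma~\ref{lem:twoisolated2} to Lemma~\ref{lem:twoisolated} via Theorem~\ref{Hertz1999}. Set $G' := G \oplus (A,B)$. By Theorem~\ref{Hertz1999}, $G'$ is isomorphic to a graph in $\mathcal{F}_{\obs}$, and since flips are involutive and commute, $H = G' \oplus \mathcal{P}'$ where $\mathcal{P}' := \mathcal{P} \setminus \{(A,B)\} \subseteq \{(A,A), (B,B)\}$. Since $V(G') = V(G)$ and the partition is unchanged, $|A| < |B|$ still holds.

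I then apply Lemma~\ref{lem:twoisolated} to $G'$ with the partition $(A,B)$ and the set $\mathcal{P}'$. The hypothesis that $H$ has at least two isolated vertices forces one of the two configurations from Lemma~\ref{lem:twoisolated}: $G'$ is the co-gem and either $A$ consists of the isolated vertex of $G'$ together with a pendant vertex with $\mathcal{P}' = \{(B,B)\}$, or $A$ consists of a pendant vertex and its unique neighbor with $\mathcal{P}' = \{(A,A)\}$. In particular $\mathcal{P} \in \{\{(A,B),(B,B)\},\{(A,B),(A,A)\}\}$, matching the two desired possibilities.

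It remains to verify that $G = G' \oplus (A,B)$ is the gem with $A$ situated as claimed. This is a direct computation: label the co-gem $G'$ so that $v_0$ is the isolated vertex and $v_1 v_2 v_3 v_4$ is the induced path. In the first case $A = \{v_0, v_1\}$ (up to the symmetry $v_1 \leftrightarrow v_4$), and the only edge of $G'$ across $(A,B)$ is $v_1 v_2$; flipping $(A,B)$ therefore makes $v_3$ adjacent to all other vertices and produces the induced path $v_2 v_0 v_4 v_1$ among $V\setminus\{v_3\}$, so $G$ is the gem in which $v_0$ and $v_1$ are non-adjacent vertices of degrees $3$ and $2$ respectively. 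In the second case $A = \{v_1, v_2\}$ (again up to symmetry), and an analogous computation shows $v_1$ becomes the dominating vertex of the resulting gem while $v_2$ has degree $3$, with $v_1 v_2 \in E(G)$. The ``exactly two isolated vertices'' clause is inherited verbatim from Lemma~\ref{lem:twoisolated}, since $H$ is the same graph.

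There is no conceptual obstacle: the only thing to be careful about is the bookkeeping in the two case computations, in particular making sure the correct pendant vertex of the co-gem is used so that the resulting gem identifies $A$ as either a non-adjacent $\{$deg-$2$, deg-$3\}$ pair or as $\{$dominating, deg-$3\}$. Invoking the symmetry of the co-gem that swaps the two pendant vertices lets one handle each case with a single labeling choice.
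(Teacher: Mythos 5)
Your proposal is correct and follows the paper's proof exactly: reduce to Lemma~\ref{lem:twoisolated} by setting $G'=G\oplus(A,B)$, invoke Theorem~\ref{Hertz1999} to place $G'$ in $\mathcal{F}_{\obs}$, and translate the two co-gem configurations back through the flip. The only difference is that you carry out the back-translation computation explicitly (and your two case computations check out), whereas the paper compresses this into ``This implies the lemma.''
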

\begin{proof}
    Let $G'= G\oplus (A,B)$ and let $\mathcal{P}'=\mathcal{P}\setminus \{(A, B)\}$. Then $H=G'\oplus \mathcal{P}'$ where $\mathcal{P}'\subseteq \{(A,A), (B,B)\}$. By Theorem~\ref{Hertz1999}, $G'$ is isomorphic to a graph in $\mathcal{F}_{\obs}$. Thus, by Lemma~\ref{lem:twoisolated}, 
    either 
    \begin{itemize}
        \item $G'$ is isomorphic to the co-gem and $A$ consists of the isolated vertex and one of the vertices of degree $1$ and $\mathcal{P}'=\{(B, B)\}$, or
        \item $G'$ is isomorphic to the co-gem and $A$ consists of one of the vertices of degree $1$ and its neighbor and $\mathcal{P}'=\{(A, A)\}$. 
    \end{itemize}   
    Moreover, $H$ has exactly two isolated vertices. This implies the lemma.
\end{proof}

\begin{figure}
    \centering
        \tikzstyle{v}=[circle,draw,fill=black!30,inner sep=0pt,minimum width=4pt]
        \begin{tikzpicture}[scale = .8]
            \draw (-6.5,.5) node[v](v){};
            \node at (-6.5,.8) {\footnotesize B};
            \draw (-8,-.5) node[v](v1){};
            \node at (-8,-0.2) {\footnotesize A};
            \draw(-7,-.5)node[v](v2){};
            \node at (-7.1,-0.2) {\footnotesize B};
            \draw (-6,-.5) node[v](v3){};
            \node at (-5.9,-0.2) {\footnotesize B};
            \draw (-5,-.5) node[v](v4){};
            \node at (-5,-0.2) {\footnotesize A};
            \draw (v1)--(v2)--(v3)--(v4);
            \draw (v)--(v2);
            \draw (v)--(v3);
            \node at (-6.5, -1.2) {$G$};
            \draw[->] (-4.5,0) -- (-2.5,0);
            \node at (-3.5, .3) {\footnotesize $\oplus (B, B)$};
            \draw (-0.5,.5) node[v](v){};
            \node at (-0.5,.8) {\footnotesize B};
            \draw (-2,-.5) node[v](v1){};
            \node at (-2,-0.2) {\footnotesize A};
            \draw(-1,-.5)node[v](v2){};
            \node at (-1,-0.2) {\footnotesize B};
            \draw (0,-.5) node[v](v3){};
            \node at (0,-0.2) {\footnotesize B};
            \draw (1.0,-.5) node[v](v4){};
            \node at (1.0,-0.2) {\footnotesize A};
            \draw (v1)--(v2);
            \draw (v3)--(v4);
            \node at (-.5, -1.2) {$G \oplus (B, B)$};
          \end{tikzpicture}
        \vskip 0.5cm
    
        \begin{tikzpicture}[scale = .8]
            \draw (-6.5,.5) node[v](v){};
            \node at (-6.5,.8) {\footnotesize B};
            \draw (-8,-.5) node[v](v1){};
            \node at (-8,-0.2) {\footnotesize B};
            \draw(-7,-.5)node[v](v2){};
            \node at (-7,-0.2) {\footnotesize A};
            \draw (-6,-.5) node[v](v3){};
            \node at (-6,-0.2) {\footnotesize A};
            \draw (-5,-.5) node[v](v4){};
            \node at (-5,-0.2) {\footnotesize B};
            \draw (v1)--(v2)--(v3)--(v4);
            \node at (-6.5, -1.2) {$G$};
            \draw[->] (-4.5,0) -- (-2.5,0);
            \node at (-3.5, .3) {\footnotesize $\oplus (A, A)$};
            \draw (-.5,.5) node[v](v){};
            \node at (-.5,.8) {\footnotesize B};
            \draw (-2,-.5) node[v](v1){};
            \node at (-2,-0.2) {\footnotesize B};
            \draw(-1,-.5)node[v](v2){};
            \node at (-1,-0.2) {\footnotesize A};
            \draw (0,-.5) node[v](v3){};
            \node at (0,-0.2) {\footnotesize A};
            \draw (1,-.5) node[v](v4){};
            \node at (1,-0.2) {\footnotesize B};
            \draw (v1)--(v2);
            \draw (v3)--(v4);
            \node at (-.5, -1.2) {$G \oplus (A, A)$};
            
        \end{tikzpicture}
        
    \caption{Two possible cases in Lemma~\ref{lem:componentsizetwo} where $G \oplus \mathcal{P}$ has a connected component on exactly $2$ vertices.}
    \label{fig:componentsizetwo}
\end{figure}

   In Lemmas~\ref{lem:componentsizetwo} and~\ref{lem:componentsizetwo2}, 
   we investigate which flips of graphs in $\mathcal{F}_{\obs}$ have  a component on exactly two vertices. Similarly, we first deal with cases when we do not flip $(A,B)$ for a given vertex partition $(A,B)$ in Lemma~\ref{lem:componentsizetwo}, and then deal with cases when we flip $(A,B)$ in Lemma~\ref{lem:componentsizetwo2}.
   
\begin{lemma}\label{lem:componentsizetwo}
    Let $G\in \mathcal{F}_{\obs}$  and
    let $(A, B)$ be a vertex partition of $G$ with $|A|< |B|$ and let $H=G\oplus \mathcal{P}$ for some $\mathcal{P}\subseteq \{(A, A), (B, B)\}$. If $H$ has a connected component on exactly two vertices, then either
    \begin{itemize}
        \item $G$ is the bull and $A$ consists of two vertices of degree $1$  and $\mathcal{P}=\{(B,B)\}$, or
        \item $G$ is the co-gem and $A$ consists of two vertices of degree $2$ and $\mathcal{P}=\{(A,A)\}$.
    \end{itemize} 
\end{lemma}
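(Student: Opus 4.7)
The plan is to let $\{x,y\}$ be the given component of $H$ of size exactly two and to case-split on the position of $\{x,y\}$ relative to the partition $(A,B)$. The key observation, analogous to the proof of Lemma~\ref{lem:twoisolated}, is that since $\mathcal{P}\subseteq\{(A,A),(B,B)\}$, the adjacencies between $A$ and $B$ in $G$ and in $H$ coincide. Combined with $|V(G)|=5$ and $|A|<|B|$, so $|A|\in\{1,2\}$, this restricts $G$ very tightly.

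If $\{x,y\}\subseteq A$, then $|A|=2$, $A=\{x,y\}$, and both vertices are anti-complete to $B$ in $G$. Depending on whether $xy$ is an edge of $G$, either $\{x,y\}$ is already a component of $G$ or $x,y$ are two isolated vertices of $G$; but no graph in $\mathcal{F}_{\obs}$ contains a $2$-vertex component or has two isolated vertices. The case $\{x,y\}\subseteq B$ is analogous: if $(B,B)\notin\mathcal{P}$ then $\{x,y\}$ is a $2$-component of $G$, while if $(B,B)\in\mathcal{P}$ then $x$ and $y$ are two non-adjacent vertices of $G$ with identical neighborhoods, and a direct inspection of $\mathcal{F}_{\obs}$ rules this out.

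The main case is $x\in A,\,y\in B$. Preservation of $A$-$B$ adjacencies forces $xy\in E(G)$, $x$'s unique $B$-neighbor in $G$ is $y$, and $y$'s unique $A$-neighbor in $G$ is $x$. If $|A|=1$ then $\deg_G(x)=1$, and either $\{x,y\}$ is a component of $G$ (when $(B,B)\notin\mathcal{P}$) or $y$ is a dominating vertex of $G$ (when $(B,B)\in\mathcal{P}$), which forces $G$ to be the gem and contradicts $\deg_G(x)=1$ since the gem has minimum degree $2$. Hence $|A|=2$; write $A=\{x,w\}$ and $B=\{y,p,q\}$. The conditions $xw\notin E(H)$, $yp\notin E(H)$, and $yq\notin E(H)$ then determine each of $xw,yp,yq$ as an edge or non-edge of $G$ based on the membership of $(A,A)$ and $(B,B)$ in $\mathcal{P}$, pinning down all edges of $G$ incident with $\{x,y\}$. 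For each of the four choices of $\mathcal{P}$, one enumerates the $2^3=8$ placements of the undetermined edges $wp,wq,pq$ and matches the resulting degree sequence against those of $C_5$, bull, gem, and co-gem, namely $(2,2,2,2,2)$, $(1,1,2,3,3)$, $(2,2,3,3,4)$, and $(0,1,1,2,2)$. Exactly two configurations survive and, after verifying that they genuinely realize the claimed graph up to isomorphism, they yield the two conclusions: $\mathcal{P}=\{(B,B)\}$ with $G$ the bull and $A$ its pair of degree-$1$ vertices, and $\mathcal{P}=\{(A,A)\}$ with $G$ the co-gem and $A$ its pair of degree-$2$ vertices.

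The main obstacle is the bookkeeping in the third case, which formally involves $4\times 8=32$ sub-cases; the distinctive degree sequences of the members of $\mathcal{F}_{\obs}$ and the $p\leftrightarrow q$ symmetry make the pruning fast, but one must confirm in the two surviving configurations that $G$ is actually isomorphic to the claimed graph and that $A$ falls into the correct degree class.
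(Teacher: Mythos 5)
Your proposal is correct and follows essentially the same strategy as the paper's proof: both exploit that flips in $\mathcal{P}\subseteq\{(A,A),(B,B)\}$ preserve all $A$--$B$ adjacencies, and then case-split on how the two-vertex component meets $A$ and $B$ (the cases $\{x,y\}\subseteq A$, $\{x,y\}\subseteq B$, and the mixed case, with $|A|\in\{1,2\}$). The only difference is cosmetic: where the paper closes the mixed case by structural deductions about which member of $\mathcal{F}_{\obs}$ can host the forced degrees, you finish by enumerating the three undetermined edges against the degree sequences, which checks out.
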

\begin{proof}
Suppose $H$ has a connected component $C$ on exactly two vertices. As $G$ has no connected component of exactly two vertices, $\mathcal{P}\neq \emptyset$. Observe that for every $D\in \mathcal{F}_{\obs}$, we have  $\overline{D}\in \mathcal{F}_{\obs}$. Thus, $A\neq \emptyset$.

First assume that $|A|=1$ and let $v\in A$.  We may assume that $(A, A)\notin \mathcal{P}$, as flipping $(A,A)$ does not change the graph. Thus, $(B, B)\in \mathcal{P}$, and $B\cap V(C)$ and $B\setminus V(C)$ are complete in $G$. 

If $v\in V(C)$, then the vertex in $B\cap V(C)$ dominates $G$, and thus $G$ is isomorphic to the gem. Since $(A,B)\notin \mathcal{P}$, $v$ is anti-complete to $B\setminus V(C)$. So, $v$ has degree $1$ in $G$, but the gem has no vertex of degree $1$, a contradiction.
If $v\notin V(C)$, then $G$ contains a cycle of length $4$ and it is isomorphic to the gem. As the gem has no vertices of degree at most $1$, $v$ is complete to $B\setminus V(C)$ in $G$. Then $G$ contains a subgraph isomorphic to $K_{2,3}$, a contradiction.

Now, we assume that $\abs{A}=2$.
Since $G$ has no two isolated vertices and has no component on two vertices, there is an edge between $A$ and $B$ in $G$. Since $(A,B)\notin \mathcal{P}$, $H[A]$ is not a connected component of $H$. Thus, $\abs{V(C)\cap A}\le 1$.

Assume that $\abs{V(C)\cap A}=0$. Since $(A,B)\notin \mathcal{P}$, $A$ is anti-complete to $V(C)$ in $G$. So, there is an edge between $V(C)$ and $B\setminus V(C)$ in $G$, and we have $(B, B)\in \mathcal{P}$. Then the two vertices in $V(C)$ are vertices of degree $1$ having the same neighborhood in $G$, which is not possible. 

Now, assume that $\abs{V(C)\cap A}=1$. Let $w\in V(C)\cap A$ and $z\in V(C)\cap B$. Since $(A,B)\notin \mathcal{P}$, $w$ is anti-complete to $B\setminus \{z\}$ and $z$ is anti-complete to $A\setminus \{w\}$ in $G$.

If $(B, B)\in \mathcal{P}$, then $z$ is complete to $B\setminus \{z\}$ in $G$. Since $w$ is anti-complete to $B\setminus \{z\}$, $G$ cannot be the gem, and it is isomorphic to the bull. If $(A,A)$ is also in $\mathcal{P}$, then $H$ is isomorphic to the co-gem. Thus, $(A, A)\notin \mathcal{P}$ and this is one of the desired cases, depicted in Figure~\ref{fig:componentsizetwo}.

If $(B, B)\notin \mathcal{P}$, then $\mathcal{P}=\{(A,A)\}$. Since $(B, B)\notin \mathcal{P}$, $z$ has degree $1$ in $G$, while $w$ has degree $2$ in $G$. So, $G$ is isomorphic to the co-gem, and this is one of the desired cases, depicted in Figure~\ref{fig:componentsizetwo}.
\end{proof}

\begin{lemma}\label{lem:componentsizetwo2}
    Let $G\in \mathcal{F}_{\obs}$  and
    let $(A, B)$ be a vertex partition of $G$ with $|A|< |B|$ and let $H=G\oplus \mathcal{P}$ for some $(A, B)\in \mathcal{P}\subseteq \{(A, A), (B, B), (A, B)\}$. If $H$ has a connected component on exactly two vertices, then either
    \begin{itemize}
        \item $G$ is the gem and $A$ consists of two vertices of degree $2$ and $\mathcal{P}=\{(B, B), (A, B)\}$, or
        \item $G$ is the bull and $A$ consists of two vertices of degree $3$ and $\mathcal{P}=\{(A, A), (A, B)\}$.
    \end{itemize}
\end{lemma}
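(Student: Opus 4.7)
The plan is to mirror the derivation of Lemma~\ref{lem:twoisolated2} from Lemma~\ref{lem:twoisolated}. Set $G'=G\oplus (A,B)$ and $\mathcal{P}'=\mathcal{P}\setminus \{(A,B)\}$, so that $H=G'\oplus \mathcal{P}'$ with $\mathcal{P}'\subseteq \{(A,A),(B,B)\}$. By Theorem~\ref{Hertz1999}, $G'$ is isomorphic to a graph in $\mathcal{F}_{\obs}$, and $(A,B)$ remains a vertex partition of $G'$ with $\abs{A}<\abs{B}$. Since $H$ contains a connected component on exactly two vertices, Lemma~\ref{lem:componentsizetwo} applies to $G'$ and yields exactly two possibilities: either $G'$ is the bull, $A$ consists of the two vertices of degree~$1$ in $G'$, and $\mathcal{P}'=\{(B,B)\}$; or $G'$ is the co-gem, $A$ consists of the two vertices of degree~$2$ in $G'$, and $\mathcal{P}'=\{(A,A)\}$.

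Next I would translate each case back to $G=G'\oplus (A,B)$. In the first subcase, $A$ comprises the two horns $a_1,a_2$ of the bull and $B$ comprises the three triangle vertices $b_1,b_2,b_3$, where (say) $a_i b_i\in E(G')$ for $i\in\{1,2\}$. Flipping $(A,B)$ turns $\{a_1 b_1,a_2 b_2\}$ into $\{a_1 b_2, a_1 b_3, a_2 b_1, a_2 b_3\}$, so $G$ has degree sequence $(4,3,3,2,2)$ with $b_3$ dominating, identifying $G$ as the gem and $A=\{a_1,a_2\}$ as its two vertices of degree $2$. In the second subcase, $A$ comprises the two interior vertices $p_2,p_3$ of the $P_4$-component of the co-gem and $B$ comprises the isolated vertex $z$ together with the endpoints $p_1,p_4$. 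Flipping $(A,B)$ replaces $\{p_1 p_2, p_3 p_4\}$ by $\{p_2 z, p_2 p_4, p_3 z, p_3 p_1\}$, producing degree sequence $(3,3,2,1,1)$, which identifies $G$ as the bull and $A=\{p_2,p_3\}$ as its two vertices of degree~$3$. Assembling $\mathcal{P}=\mathcal{P}'\cup \{(A,B)\}$ then matches the two listed conclusions of the lemma exactly.

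The entire argument is driven by Theorem~\ref{Hertz1999} plus Lemma~\ref{lem:componentsizetwo}; the only thing to be careful about is the small case analysis identifying the graphs obtained after performing the inverse $(A,B)$-flip. Since there are only two candidate graphs and their flipped images are uniquely determined up to isomorphism, this bookkeeping is the sole place where errors could creep in and is the main point to double-check.
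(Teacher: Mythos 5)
Your proposal is correct and follows the paper's proof exactly: pass to $G'=G\oplus(A,B)$, invoke Theorem~\ref{Hertz1999} and Lemma~\ref{lem:componentsizetwo}, and translate the two outcomes back through the $(A,B)$-flip. The only difference is that you spell out the final bookkeeping (bull $\mapsto$ gem with $A$ the two degree-$2$ vertices, co-gem $\mapsto$ bull with $A$ the two degree-$3$ vertices), which the paper compresses into ``This implies the lemma,'' and your verification of those two identifications is accurate.
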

\begin{proof}
    Let $G'= G\oplus (A,B)$ and let $\mathcal{P}'=\mathcal{P}\setminus \{(A, B)\}$. Then $H=G'\oplus \mathcal{P}'$ where $\mathcal{P}'\subseteq \{(A,A), (B,B)\}$. By Theorem~\ref{Hertz1999}, $G'$ is isomorphic to a graph in $\mathcal{F}_{\obs}$. Thus, by Lemma~\ref{lem:componentsizetwo}, 
    either 
  \begin{itemize}
        \item $G'$ is isomorphic to the bull and $A$ consists of two vertices of degree $1$  and $\mathcal{P}'=\{(B,B)\}$, or
        \item $G'$ is isomorphic to the co-gem and $A$ consists of two vertices of degree $2$ and $\mathcal{P}'=\{(A,A)\}$.
    \end{itemize}  
    This implies the lemma.
\end{proof}
Now, we prove our main result.

\begin{theorem}\label{thm:main1}
    For every $G\in \mathcal{F}_{\obs}$ and every $r\in (\mathbb{N}\setminus \{1\})\cup \{\infty\}$, $\fw_r(G) \ge 3$.
\end{theorem}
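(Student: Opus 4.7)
The plan is to first reduce to the case $r = 2$. Since a flipper strategy winning the radius-$r$ width-$k$ game also wins the radius-$r'$ width-$k$ game for any $r' \le r$ (the runner then has strictly fewer response paths, so the same strategy still catches every possible runner), the parameter $\fw_r(G)$ is non-decreasing in $r$, and so it suffices to prove $\fw_2(G) \ge 3$ for every $G \in \mathcal{F}_{\obs}$. I would argue by contradiction: assume $\fw_2(G) \le 2$ for some $G \in \mathcal{F}_{\obs}$, and fix a play $(v_0, G_1, v_1, \ldots, v_{n-1}, G_n)$ of the radius-$2$ width-$2$ flipper game on $G$ in which the flipper catches the runner at step $n$ regardless of the runner's response.

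By Lemma~\ref{lem:lastconfiguration2}, the set $S$ of isolated vertices of $G_n$ satisfies $\abs{S} \ge 2$. Applying Lemmas~\ref{lem:twoisolated} and~\ref{lem:twoisolated2} to the $2$-flip $G_n$ of $G$, one observes that none of the four permissible configurations involves $C_5$ or the bull, so $G$ must be the gem or the co-gem, and moreover $\abs{S} = 2$. I would then compute $S$ explicitly in each of the four configurations to establish the key observation: in every case $S$ contains the \emph{apex} vertex of $G$, namely the dominating vertex when $G$ is the gem and the isolated vertex when $G$ is the co-gem (the top vertex in the drawings of Figure~\ref{fig:smallgraphs}).

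The second part of Lemma~\ref{lem:lastconfiguration2} then implies that $G_{n-1}[S]$ is a connected component of $G_{n-1}$, so $G_{n-1}$ contains a connected component on exactly two vertices. Applying Lemmas~\ref{lem:componentsizetwo} and~\ref{lem:componentsizetwo2} to the $2$-flip $G_{n-1}$ of $G$, I would enumerate the relevant configurations for the gem and for the co-gem and verify by direct computation that every resulting $2$-vertex component \emph{avoids} the apex vertex. This is incompatible with the conclusion of the previous paragraph, yielding the required contradiction. The main obstacle will be the bookkeeping in this final step: although the lemmas already cut the analysis down to a small number of flip patterns on a five-vertex graph, one must track carefully which vertex plays the apex role in each configuration in order to see that the two structural constraints on $S$—that it consists of the isolated vertices of $G_n$ and that it is simultaneously the vertex set of a $2$-vertex component of $G_{n-1}$—cannot both be met.
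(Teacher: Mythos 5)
Your proposal is correct and follows essentially the same route as the paper's proof: reduce to $r=2$, apply Lemma~\ref{lem:lastconfiguration2} to get $\abs{S}\ge 2$, use Lemmas~\ref{lem:twoisolated} and~\ref{lem:twoisolated2} to pin $G$ down to the gem or co-gem with $\abs{S}=2$ and $S$ containing the dominating (resp.\ isolated) vertex, and then use Lemmas~\ref{lem:componentsizetwo} and~\ref{lem:componentsizetwo2} to show the two-vertex component of $G_{n-1}$ cannot contain that vertex, contradicting $V(C)=S$. The ``apex vertex'' bookkeeping you flag as the main obstacle is exactly the final step carried out in the paper.
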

\begin{proof}
Let $G\in \mathcal{F}_{\obs}$. It is sufficient to show that $\fw_2(G) \ge 3$ as $\fw_r(G)\ge \fw_2(G)$ for all $r\in (\mathbb{N}\setminus \{1\})\cup \{\infty\}$.

Suppose that $\fw_2(G) \le 2$. Then there is a play $(v_0, G_1, v_1, \ldots, v_{n-1}, G_n)$ for the flipper game of radius $2$ and width $2$ played on $G$ such that 
for every possible choice of the runner in $G_n$, the flipper catches the runner at $G_n$.

Let $S$ be the set of all isolated vertices in $G_n$.
By Lemma~\ref{lem:lastconfiguration2}, $S$ has at least two isolated vertices.
For each $i\in [n]$, let $(A_i, B_i)$ be a vertex partition of $G$, where $|A_i|< |B_i|$ and $A_i$ is possibly an empty set such that $G_i=G\oplus \mathcal{S}_i$ for some $\mathcal{S}_i\subseteq \{(A_i, A_i), (B_i, B_i), (A_i,B_i)\}$.  

By Lemmas~\ref{lem:twoisolated} and~\ref{lem:twoisolated2}, we have that either
    \begin{itemize}
        \item $G$ is the co-gem and $A_n$ consists of the isolated vertex and one of the vertices of degree $1$ and $\mathcal{S}_n=\{(B_n, B_n)\}$, 
        \item $G$ is the co-gem and $A_n$ consists of one of the vertices of degree $1$ and its neighbor and $\mathcal{S}_n=\{(A_n, A_n)\}$, 
        \item $G$ is the gem and $A_n$ consists of two non-adjacent vertices of degree $2$ and $3$ and $\mathcal{S}_n=\{(B_n, B_n), (A_n, B_n)\}$, or
        \item $G$ is the gem and $A_n$ consists of the dominating vertex and a vertex of degree $3$ and $\mathcal{S}_n=\{(A_n, A_n), (A_n, B_n)\}$.
    \end{itemize}
    Moreover, in any case, we have that $\abs{S}=2$.
    By Lemma~\ref{lem:lastconfiguration2}, $G_{n-1}[S]$ is a connected component of $G_{n-1}$. Let $C=G_{n-1}[S]$.

Now, by Lemmas~\ref{lem:componentsizetwo} and~\ref{lem:componentsizetwo2},
    we have that either
    \begin{itemize}
        \item $G$ is the bull and $A_{n - 1}$ consists of two vertices of degree $1$  and $\mathcal{S}_{n - 1}=\{(B_{n - 1}, B_{n - 1})\}$, 
        \item $G$ is the co-gem and $A_{n - 1}$ consists of two vertices of degree $2$ and $\mathcal{S}_{n - 1}=\{(A_{n - 1}, A_{n - 1})\}$, 
        \item $G$ is the gem and $A_{n - 1}$ consists of two vertices of degree $2$ and~$\mathcal{S}_{n - 1}=\{(B_{n - 1}, B_{n - 1}), (A_{n - 1}, B_{n - 1})\}$, or
        \item $G$ is the bull and $A_{n - 1}$ consists of two vertices of degree $3$ and $\mathcal{S}_{n - 1}=\{(A_{n - 1}, A_{n - 1}), (A_{n - 1}, B_{n - 1})\}$.
    \end{itemize}

     Combining the obtained results, we deduce that $G$ is either the co-gem or the gem. 
     
     Assume that $G$ is the co-gem. Then $S$ contains the isolated vertex of $G$, while $C$ does not contain the isolated vertex. So, it contradicts the fact that $V(C)=S$.

     Assume that $G$ is the gem. Then $S$ contains the dominating vertex of $G$, while $C$ does not contain the dominating vertex. This contradicts the fact that $V(C)=S$.

     Therefore, such a play does not exist. We conclude that $\fw_2(G)\ge 3$.
     \end{proof}

\section{Radius-$r$ flip-width of ($C_5$, bull, gem, co-gem)-free graphs}\label{sec:boundedflipwidth}

In this section, we prove that ($C_5$, bull, gem, co-gem)-free graphs have radius-$r$ flip-width at most~$2$ for all $r\in \mathbb{N}\cup \{\infty\}$.

 Let $G$ be a graph. A partition $(A, B, C)$ of $V(G)$ is a \emph{triple of bi-joins} if $(A, B\cup C)$, $(B, A\cup C)$, and $(C, A\cup B)$ are bi-joins of $G$. The following lemma is useful to find a proper $2$-flip to remove edges incident with two distinct sets of $A$, $B$, and $C$.
\begin{lemma}\label{lem:threepartition}
    Let $G$ be a graph and let $(A, B, C)$ be a triple of bi-joins of $G$. Then there is a $2$-flip $G'$ of $G$ such that in $G'$ there are no edges incident with two distinct sets of $A$, $B$, and $C$.
\end{lemma}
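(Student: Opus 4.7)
The plan is to use the three bi-join structures simultaneously to construct a single $2$-partition $\mathcal{P}=\{P,Q\}$ of $V(G)$ and a set $\mathcal{S}$ of at most three flips on this partition so that $G\oplus\mathcal{S}$ has no edges incident with two distinct sets of $A$, $B$, and $C$, while edges entirely inside $A$, $B$, or $C$ may be altered (which is irrelevant to the conclusion).

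First, I apply the bi-join $(A, B\cup C)$ to obtain partitions $A = A_0 \sqcup A_1$ and $B\cup C = Z_0 \sqcup Z_1$ such that $A_i$ is complete to $Z_i$ and anti-complete to $Z_{1-i}$. Setting $B_i := B \cap Z_i$ and $C_i := C \cap Z_i$, the $A$-$B$ edges are exactly $(A_0\times B_0)\cup(A_1\times B_1)$ and the $A$-$C$ edges are exactly $(A_0\times C_0)\cup(A_1\times C_1)$. In other words, treating the subscript as an index in $\{0,1\}$, both the $A$-to-$B$ and the $A$-to-$C$ adjacencies obey the rule ``edge if and only if same index.''

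Next, I analyze the $B$-$C$ edges using the bi-joins $(B, A\cup C)$ and $(C, A\cup B)$. The key observation is that whenever two bi-join decompositions of $G$ describe the same bipartite adjacency (here, between $A$ and $B$), their induced partitions of the common set must coincide up to swapping the two sides, unless one of them is trivial (one part empty). A parallel fact applies to $C$. Combining these with a case analysis (choosing labelings appropriately in the degenerate cases), I obtain a constant $\epsilon\in\{0,1\}$ such that for every $u\in B_i$ and $v\in C_j$, $uv\in E(G)$ if and only if $i+j\equiv \epsilon\pmod{2}$. Thus all cross-set adjacencies are encoded by the indices attached to the vertices.

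Finally, I choose the $2$-partition according to $\epsilon$. If $\epsilon=0$, let $P=A_0\cup B_0\cup C_0$ and $Q=A_1\cup B_1\cup C_1$: every edge between distinct sets of $\{A,B,C\}$ lies inside $P$ or inside $Q$, while every non-edge between distinct sets crosses $P$ and $Q$, so setting $\mathcal{S}=\{(P,P),(Q,Q)\}$ toggles exactly the cross-set edges to non-edges and leaves the cross-set non-edges unchanged. If $\epsilon=1$, instead let $P=A_0\cup B_1\cup C_1$ and $Q=A_1\cup B_0\cup C_0$: every cross-set edge now lies between $P$ and $Q$ and every cross-set non-edge lies inside $P$ or inside $Q$, so $\mathcal{S}=\{(P,Q)\}$ suffices. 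The main obstacle is the alignment step in the middle paragraph---verifying that the $B$- and $C$-partitions induced by the three bi-joins can be reconciled into a single parity rule---particularly in degenerate cases where some bi-join partition has an empty side, where the partition indices must be chosen to match the cross-set structure.
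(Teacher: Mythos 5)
Your proof is correct and follows essentially the same route as the paper's: both arguments reduce to showing that the three bi-joins induce a single pair of ``index classes'' on each of $A$, $B$, $C$ so that every cross-set adjacency obeys a parity rule, and then split into the same two cases (for $\epsilon=0$ flip $(P,P)$ and $(Q,Q)$; for $\epsilon=1$ flip $(P,Q)$), with the same choices of $P$ and $Q$ as in the paper's Cases 1 and 2. The alignment step you flag as the main obstacle does go through (and is likewise asserted without detail in the paper): since $A\neq\emptyset$, for any $a\in A$ the set $N_G(a)\cap B$ is a part of both candidate bipartitions of $B$, and two complementary pairs of subsets of $B$ sharing a part must coincide, so no separate treatment of the degenerate cases is actually needed; the same argument aligns the two bipartitions of $C$ and then the $B$--$C$ adjacency.
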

\begin{proof}
    Let $(A_1, A_2)$ be the $A$-part of the bi-join partition of $(A, B\cup C)$, 
    let $(B_1, B_2)$ be the $B$-part of the bi-join partition of $(B, A\cup C)$, and
    let $(C_1, C_2)$ be the $C$-part of the bi-join partition of $(C, A\cup B)$. 
    
    Clearly, $(A_1, A_2, B_1, B_2)$ or $(A_1, A_2, B_2, B_1)$ is a bi-join partition of $(A, B)$ in $G[A\cup B]$.
    By exchanging $B_1$ and $B_2$ if necessary, we may assume that $(A_1, A_2, B_1, B_2)$ is a bi-join partition of $(A, B)$ in $G[A\cup B]$. 
    Similarly, by exchanging $C_1$ and $C_2$ if necessary, we may assume that $(A_1, A_2, C_1, C_2)$ is a bi-join partition of $(A, C)$ in $G[A\cup C]$.

    Now, there are two cases; either a bi-join partition of $(B, C)$ in $G[B\cup C]$ is equivalent to $(B_1, B_2, C_1, C_2)$, or a bi-join partition of $(B, C)$ in $G[B\cup C]$ is equivalent to $(B_1, B_2, C_2, C_1)$.  
    
    \medskip
    \noindent\textbf{(Case 1. A bi-join partition of $(B, C)$ in $G[B\cup C]$ is equivalent to $(B_1, B_2, C_1, C_2)$.)}

    In this case, the set of all edges incident with two sets of $A$, $B$, and $C$ in $G$ is the union of 
    \begin{itemize}
        \item $\{vw:v\in V\in \{A_1, B_1, C_1\}, w\in W\in \{A_1, B_1, C_1\}, V\neq W\}$, and
        \item $\{vw:v\in V\in \{A_2, B_2, C_2\}, w\in W\in \{A_2, B_2, C_2\}, V\neq W\}$.
    \end{itemize}
    Let $X=A_1\cup B_1\cup C_1$ and $Y=A_2\cup B_2\cup C_2$.
    Then in $G\oplus (X, X)\oplus (Y, Y)$, there are no edges incident with two sets of $A$, $B$, and $C$.

    \medskip
    \noindent\textbf{(Case 2. A bi-join partition of $(B, C)$ in $G[B\cup C]$ is equivalent to $(B_1, B_2, C_2, C_1)$.)}
      
      In this case, the set of all edges incident with two sets of $A$, $B$, and $C$ in $G$ is the union of 
    \begin{itemize}
        \item $\{vw:v\in A_1, w\in B_1\cup C_1\}$, 
        \item $\{vw:v\in B_2, w\in C_1\cup A_2\}$, and 
        \item $\{vw:v\in C_2, w\in A_2\cup B_1\}$. 
    \end{itemize}
     Let $X=A_1\cup B_2\cup C_2$ and $Y=A_2\cup B_1\cup C_1$.
    Then in $G\oplus (X, Y)$, there are no edges incident with two sets of $A$, $B$, and $C$.
    \end{proof}

\begin{theorem}\label{thm:bounding}
    Let $G$ be a ($C_5$, bull, gem, co-gem)-free graph and $r\in \mathbb{N}\cup \{\infty\}$. Then $\fw_r(G) \le 2$.
\end{theorem}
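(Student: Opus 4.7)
The plan is to have the flipper descend the bi-join decomposition $(T,\beta)$ of $G$, exploiting the fact that by Theorem~\ref{thm:decomposable} every internal node of $T$ is complete. We maintain, after each round $k$, a \emph{trap} $T_k\subseteq V(G)$ of the form $\bigcup_{i\in S}A_{tt_i,t_i}$ for some node $t\in T$ and some nonempty subset $S$ of its neighbors, with the invariant that the connected component of $v_k$ in $G_k$ is contained in $T_k$; we set $T_0:=V(G)$.

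In round $1$, we pick an internal node $t$ of $T$ of degree at least $3$ (which exists whenever $|V(G)|\ge 3$, since $T$ then has at least three leaves), partition its neighbors into three nonempty groups, and obtain a triple of bi-joins $(A,B,C)$ of $G$. Applying Lemma~\ref{lem:threepartition} yields a $2$-flip $G_1$ in which $A$, $B$, $C$ are pairwise disconnected; we set $T_1$ to be whichever of $A,B,C$ contains $v_1$. For $k\ge 2$, given $T_{k-1}=\bigcup_{i\in S}A_{tt_i,t_i}$, we split $T_{k-1}$ into two nonempty sub-unions $A_1,A_2$ of the same form: either directly at $t$ when $|S|\ge 2$ by partitioning $S$, or by descending to the complete node at the root of the single remaining subtree when $|S|=1$ (sliding through degree-$2$ internal nodes of $T$ if necessary, a descent that must terminate at a node of degree $\ge 3$ because the subtree contains at least two leaves of $T$ whenever $|T_{k-1}|\ge 2$). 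By Theorem~\ref{thm:complete}, each of $(A_1,V(G)\setminus A_1)$, $(A_2,V(G)\setminus A_2)$, and $(T_{k-1},V(G)\setminus T_{k-1})$ is a bi-join of $G$, so $(A_1,A_2,V(G)\setminus T_{k-1})$ is a triple of bi-joins, and Lemma~\ref{lem:threepartition} produces a $2$-flip $G_k$ in which these three sets are pairwise disconnected; we let $T_k$ be whichever of $A_1,A_2$ contains $v_k$.

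Since $|T_k|<|T_{k-1}|$ strictly each round, eventually $|T_k|=1$, at which point $v_k$ is isolated in $G_k$ and the runner is caught, using partitions of width $2$ throughout. The main technical effort will be to verify that $(A_1,A_2,V(G)\setminus T_{k-1})$ is indeed always a triple of bi-joins of $G$: this relies on Theorem~\ref{thm:complete} at the complete node where the split is performed, together with careful bookkeeping of the descent through degree-$2$ internal nodes of $T$. The small cases $|V(G)|\le 3$ can be handled directly by a single $2$-flip reducing $G$ to the edgeless graph.
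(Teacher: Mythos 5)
Your proposal is correct and follows essentially the same approach as the paper: both root the bi-join decomposition (all of whose nodes are complete by Theorem~\ref{thm:decomposable}), repeatedly extract a triple of bi-joins from a complete node, apply Lemma~\ref{lem:threepartition} to disconnect its three parts with a $2$-flip, and shrink the runner's connected component until it is a singleton. The only difference is bookkeeping — you split the current trap into two sub-unions per round with an explicit invariant, while the paper peels off children one at a time via induction on tree depth — which does not change the substance of the argument.
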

\begin{proof}
    It is sufficient to show that $\fw_\infty(G) \le 2$ as $\fw_r(G)\le \fw_{\infty}(G)$ for all $r\in \mathbb{N}$.
    
    We may assume that $\abs{V(G)}\ge 3$; otherwise, $G$ has radius-$\infty$ flip-width at most $1$.
    Let $(T, \beta)$ be the bi-join decomposition of $G$. By Theorem~\ref{thm:decomposable}, every node of $T$ is a complete node. Since $\abs{V(G)}\ge 3$, $T$ has at least one internal node. Let $z$ be an internal node of $T$, and consider it as the root of $T$.

    For every $t\in V(T)$, let $X_t$ be the set of vertices in $G$ that are mapped to a descendant of $t$ in $T$ by $\beta$, and let $Y_t=V(G)\setminus X_t$. 

    Let $d$ be the longest distance from a leaf to the root $z$. For every node $t$ of $T$, let $d(t)$ be the distance between $z$ and $t$. We prove by induction on $d-d(t)$ the following statement.
    \begin{itemize}
        \item[$(\ast)$] Let $G^*$ be a $2$-flip of $G$ where there are no edges between $X_t$ and $Y_t$ in $G^*$ and the runner is given in $X_t$. Then the flipper has a winning strategy to catch the runner. 
    \end{itemize}
    In the base case when $t$ is a leaf of $T$,
    by the assumption in $(\ast)$, $X_t$ is an isolated vertex in $G^*$. It is clear that in such a case the flipper can catch the runner.

    Thus, we may assume that $t$ is an internal node. Also, we may assume that $d-d(t)>0$ and the statement is true for all nodes $t'$ where $d-d(t')<d-d(t)$. Note that if the statement is true, then certainly, the flipper has a winning strategy for~$G$ by considering $t=z$.

    Let $t_1, \ldots, t_m$ be the children of $t$. If $m=1$, then 
    $X_t=X_{t_1}$ and $Y_t=Y_{t_1}$. Since $d-d(t_1)<d-d(t)$, by induction hypothesis, the flipper has a winning strategy. So, we may assume that $m\ge 2$.

    Observe that since $t$ is a complete node, for each $i\in [m-1]$,
    \[\mathcal{P}_i=\left(Y_t\cup \bigcup_{j\in [i]\setminus \{i\}} X_{t_j}, X_{t_i}, \bigcup_{j\in [m]\setminus [i]} X_{t_j} \right)\] is a triple of bi-joins of $G$. By Lemma~\ref{lem:threepartition}, there is a $2$-flip $G_i$ of $G$ such that in $G_i$, there are no edges incident with two sets of $\mathcal{P}_i$.

    First assume that $m=2$. We take the $2$-flip $G_1$. Then the runner is contained in $X_{t_1}$ or $X_{t_2}$, and there are no edges in $G_1$ incident with two distinct sets of $X_{t_1}$, $X_{t_2}$, and $Y_t$. Since $d-d(t_1)=d-d(t_2)<d-d(t)$, by the induction hypothesis, the flipper has a winning strategy for $G_1$. Thus we may assume that $m\ge 3$.

    From $i=1$ to $m-1$, we do the following. First we take the $2$-flip $G_1$. If the runner is contained in $X_{t_1}$, then 
    since there is no edge in $G_1$ incident with two distinct sets of $\mathcal{P}_1$, there are no edges between $X_{t_1}$ and $Y_{t_1}$ in $G_1$.
    So, by the induction hypothesis, the flipper has a winning strategy for $G_1$. Thus, we may assume that the runner is contained in  $\bigcup_{j\in [m]\setminus [1]} X_{t_j}$. 
    
    For $i<m-1$, assume that the runner is contained in $\bigcup_{j\in [m]\setminus [i]} X_{t_j}$. We take the $2$-flip $G_{i+1}$. Since in $G_i$, there are no edges between $\bigcup_{j\in [m]\setminus [i]} X_{t_j}$ and the rest, the runner is still contained in $\bigcup_{j\in [m]\setminus [i]} X_{t_j}$ in $G_{i+1}$. If the runner is contained in $X_{t_{i+1}}$, then by the induction hypothesis, the flipper has a winning strategy for $G_{i+1}$. Thus, we may assume that the runner is contained in 
    $\bigcup_{j\in [m]\setminus [i+1]} X_{t_j}$.
    This shows that at the end, the runner is contained in $X_{t_{m-1}}$ or $X_{t_m}$ in $G_{m-1}$, and we can apply the induction hypothesis to obtain a winning strategy for the flipper. 

    This proves the statement $(\ast)$. In particular, for $t=z$, $G$ itself is a $2$-flip of $G$ where there are no edges between $X_t=V(G)$ and $Y_t=\emptyset$. Therefore, the flipper has a winning strategy to catch the runner.
\end{proof}

Theorem~\ref{thm:main} follows from Theorems~\ref{thm:main1} and \ref{thm:bounding}.

\section{Conclusion}\label{sec:conclusion}

In this paper, we show that for all $r\in (\mathbb{N}\setminus \{1\})\cup \{\infty\}$, the class of graphs of radius-$r$ flip-width at most $2$ is the class of $(C_5, \text{bull}, \text{gem}, \text{co-gem})$-free graphs.

We observe that the radius-$1$ flip-width of the gem and the co-gem are at most $2$. This shows that the class of graphs of radius-$1$ flip-width at most $2$ is strictly larger than the class of graphs of radius-$2$ flip-width at most $2$.

\begin{theorem}
    The radius-$1$ flip-width of the gem and the co-gem are at most $2$.
\end{theorem}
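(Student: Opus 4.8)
The plan is to establish the bound for the co-gem first and then transfer it to the gem for free. Since the gem and the co-gem are complements of one another and, as observed in the introduction, a graph and its complement have the same radius-$r$ flip-width, it suffices to give the flipper a winning strategy in the radius-$1$, width-$2$ flipper game on the co-gem. I would fix notation by writing the co-gem as the path $a-b-c-d$ together with the isolated vertex $e$, so that its edge set is $\{ab,bc,cd\}$.

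For the first move, the flipper plays $G_1=G\oplus(\{b,c\},\{b,c\})$ using the partition $\{\{b,c\},\{a,d,e\}\}$; this toggles only the pair $bc$, hence deletes the edge $bc$, and leaves $G_1$ with edge set $\{ab,cd\}$, whose components are the two edges $\{a,b\}$, $\{c,d\}$ and the isolated vertex $e$. If the runner started at $e$, it has no neighbor in $G_0$, is forced to remain at $e$, and is caught in $G_1$; so we may assume the runner starts in $\{a,b,c,d\}$. As $e$ is isolated in $G_0$ it is never reachable, and every vertex of $\{a,b,c,d\}$ is non-isolated in $G_1$, so after the first round the runner sits at some $v_1\in\{v_0\}\cup N_{G_0}(v_0)\subseteq\{a,b,c,d\}$, i.e. inside one of the two edge-components $\{a,b\}$ or $\{c,d\}$. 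For the second move the flipper reads off which component contains $v_1$: if $v_1\in\{a,b\}$ it plays $G_2=G_1\oplus(\{a,b\},\{a,b\})$ (partition $\{\{a,b\},\{c,d,e\}\}$), and symmetrically on $\{c,d\}$ otherwise. Flipping the pair $(\{a,b\},\{a,b\})$ toggles only $ab$, deleting it and creating no new incidence at $a$ or $b$, so both become isolated in $G_2$; since the game has radius $1$ and $\{a,b\}$ is a component of $G_1$, the runner can only move within $\{a,b\}$ and is therefore caught. This yields a win in two rounds, so $\fw_1(\text{co-gem})\le 2$ and hence $\fw_1(\text{gem})\le 2$.

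The step I expect to require the most care, and the reason the strategy genuinely needs two rounds and an adaptive flipper, is the impossibility of finishing obliviously: from $G_1$ no single $2$-flip can isolate all of $a,b,c,d$ simultaneously, because any attempt to delete both $ab$ and $cd$ at once is forced by the stray vertex $e$ to introduce collateral edges. The flipper must instead exploit that it sees the runner's position in order to target the correct edge-component, exactly the kind of adaptive case analysis already used in the proof of Theorem~\ref{thm:bounding}; and the finishing flip is clean precisely because flipping $(\{x,y\},\{x,y\})$ for a two-element part removes the lone internal edge while adding no incidences to $x$ or $y$. I would present the verification that the runner cannot leak into the wrong component under radius-$1$ moves as the one point warranting explicit justification.
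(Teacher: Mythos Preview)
There is a genuine gap: in the flipper game each announced graph $G_i$ must be a $k$-flip of the \emph{original} graph $G$, not of $G_{i-1}$ (see the definition in Section~\ref{sec:prelim}, and note that the paper's own proof writes every $G_i$ in the form $G\oplus\cdots$). Your second move is declared as $G_2=G_1\oplus(\{a,b\},\{a,b\})$, a $2$-flip of $G_1$; the graph you obtain has edge set $\{cd\}$, and this is \emph{not} a $2$-flip of the co-gem $G$. In fact no $2$-flip of $G$ isolates both $a$ and $b$: by Lemmas~\ref{lem:twoisolated} and~\ref{lem:twoisolated2}, every $2$-flip of a graph in $\cF$ having at least two isolated vertices has exactly two, and for the co-gem the isolated pair is always of the form $\{e,x\}$ with $x\in\{a,b,c,d\}$. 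Put differently, isolating both $a$ and $b$ starting from $G$ (rather than from $G_1$) requires deleting the two edges $ab$ and $bc$ of $G$, and a direct check over all bipartitions shows that any $2$-flip doing so creates a new incidence at $a$ or at $b$. This is precisely the ``collateral edges from the stray vertex $e$'' phenomenon you diagnose in your final paragraph; it already bites at the level of a single component $\{a,b\}$, not only when one tries to isolate all four path vertices at once.

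So the two-round component-targeting strategy cannot finish. The paper's proof works around this obstruction with a longer adaptive sequence of $2$-flips of $G$ (up to four rounds), first funnelling the runner into a two-element set and then branching on its position. Your reduction from the gem to the co-gem via complementation is correct; the error lies solely in the second move.
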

\begin{proof}
    It is sufficient to show for co-gem. We give a strategy for the flipper.
    Let $\{w,a,b,c,d\}$ be the vertex set of the co-gem $G$ where $w$ is the isolated vertex and $abcd$ is an induced path in $G$.

    We take $(A_1, B_1)=(\{w,d\}, \{a,b,c\})$ and $G_1=G\oplus (B_1, B_1)$. We may assume that the runner is in $\{a, c, d\}$, otherwise, it is caught.

    We take $(A_2, B_2)=(\{w,a,b\}, \{c,d\})$ and $G_2=G\oplus (A_2, A_2)\oplus (B_2, B_2)$.
    Now, we may assume that the runner is in $\{a,c\}$.
    We divide cases.

    \medskip
    \noindent\textbf{(Case 1. The runner is in $a$ at $G_2$.)}
   
    We take $(A_3, B_3)=(\{a,b\}, \{w,c,d\})$ and $G_3=G\oplus (A_3, A_3)$.
    Since the runner can move to at most $1$ vertex in $G_2$, the runner is contained in $\{w,a\}$ in $G_3$, and it is caught.

   \medskip
    \noindent\textbf{(Case 2. The runner is in $c$ at $G_2$.)}

    We take $(A_3, B_3)=(\{w, a, c\}, \{b, d\})$ and $G_3=G\oplus (A_3, B_3)$. Then we may assume that the runner is moved to $b$ in $G_3$.

    Lastly, we take $(A_4, B_4)=(\{a,b,c\}, \{w,d\})$ and $G_4=G\oplus (A_4, A_4)$. Then the runner must be in $\{w,b\}$, and it is caught.
\end{proof}

    We leave the problem to characterize graphs of radius-$1$ flip-width at most $2$ as an open problem.

\paragraph*{Acknowledgements}
The authors would like to thank the anonymous referees for the careful reading of the manuscript and numerous suggestions that helped to improve the presentation.

\end{document}